\documentclass{paper}

\usepackage{arxiv}

\usepackage[utf8]{inputenc} 
\usepackage[T1]{fontenc}    
\usepackage{hyperref}       
\usepackage{url}            
\usepackage{booktabs}       
\usepackage{amsfonts}       
\usepackage{nicefrac}       
\usepackage{microtype}      
\usepackage{subcaption}
\usepackage{mathrsfs,bm,siunitx}
\usepackage{comment}
\usepackage{appendix}
\usepackage{lipsum}
\usepackage{graphicx}
\usepackage{amsmath,amsthm}
\numberwithin{equation}{section}
\usepackage{color}
\usepackage{colortbl}
\usepackage{xcolor}
\usepackage{transparent}
\usepackage{todonotes}

\graphicspath{{}}
\usepackage{tikz}
\usetikzlibrary{arrows}
\bibliographystyle{abbrv}

\newtheorem{theorem}{Theorem}[section]
\newtheorem{proposition}{Proposition}[section]

\newtheorem{corollary}{Corollary}[section]
\theoremstyle{definition}
\newtheorem{dfn}{Definition}[section]
\newtheorem{remark}{Remark}[section]

\title{Well-defined forward operators in dynamic diffractive tensor tomography using viscosity solutions of transport equations}

\author{
  Lukas Vierus \\
  Department of Numerical Mathematics\\
  Saarland University\\
  Saarbr\"ucken, Germany \\
  \texttt{vierus@num.uni-sb.de} \\
       \And
  Thomas Schuster \\
  Department of Numerical Mathematics\\
  Saarland University\\
  Saarbr\"ucken, Germany \\
  \texttt{thomas.schuster@num.uni-sb.de} \\
}

\begin{document}
\maketitle
\begin{abstract}
We consider a general setting for dynamic tensor field tomography in an inhomogeneous refracting and absorbing medium as inverse source problem for the associated transport equation. Following Fermat's principle the Riemannian metric in the considered domain is generated by the refractive index of the medium. There is wealth of results for the inverse problem of
recovering a tensor field from its longitudinal ray transform in a static euclidean setting, whereas there are only
few inversion formulas and algorithms existing for general Riemannian metrics and time-dependent tensor fields.
It is a well-known fact that tensor field tomography is equivalent to an inverse source problem for a transport equation
where the ray transform serves as given boundary data. We prove that this result extends to the dynamic case.
Interpreting dynamic tensor tomography as inverse source problem represents a holistic approach in this field.
To guarantee that the forward mappings are well-defined, it is necessary to prove existence and uniqueness for
the underlying transport equations. Unfortunately, the bilinear forms of the associated weak formulations do not 
satisfy the coercivity condition. To this end we transfer to viscosity solutions and prove their unique existence
in appropriate Sobolev (static case) and Sobolev-Bochner (dynamic case) spaces under a certain assumption
that allows only small variations of the refractive index. Numerical evidence is given that
the viscosity solution solves the original transport equation if the viscosity term turns to zero.
\end{abstract}

\keywords{attenuated refractive dynamic ray transform of tensor fields\and geodesics \and transport equation \and viscosity solutions}

\smallskip


\smallskip

\section{Introduction}


Tensor field tomography (TFT) means to determine a tensor field, or at least part of it, from given integral data along geodesic curves of a Riemannian metric: the so-called ray transform of the field. In this article we consider TFT in a very general setting for static as well as for time-dependent fields and in a medium with absorption and which is inhomogeneous. The latter property is mathematically modeled by the fact that the domain under consideration is equipped with a corresponding Riemannian metric whose geodesics correspond to the integration paths of the ray transform. Especially
if we use, e.g., ultrasound measurements for data acquisition and follow Fermat's principle, then the metric is generated by the refractive index and the geodesic curves are normal to the propagating wave fronts. In this article we restrict the
Riemannian metric to this setting.

TFT has many possible applications. One is the reconstruction of velocity fields of liquids and gases. This can be used, e.g., to represent blood flows in medicine. TFT is also used in electron tomography, industry, geo- and astrophysics to name only a few application fields. Pioneered by Norton \cite{norton1988} in 1988, fundamental results on Doppler tomography followed by Juhlin \cite{juhlin1992}, Gullberg \cite{gullberg}, Schuster \cite{schuster2008}, and Strahlen\cite{strahlen1998}. A singular value decomposition for the 2D ray transform for vector fields can be found in \cite{DEREVTSOV;ET;AL:11}. Prince \cite{PRINCE:96} used vector tomography in MRI and Panin et al \cite{panin2002} in diffusion tensor MRI. In Sharafutdinov \cite{sharafutdinov2007}, procedures for tomography with limited data can be found. 

For a tensor field $f$ of rank $m>1$ in a Riemannian domain $(M,g)$ the attenuated longitudinal ray transform is defined as

\begin{align*}
[\mathcal{I}_\alpha f](p,q) = \int_{\gamma_{pq}} f(x)\cdot \dot{\gamma}_{pq}(x)
\exp\left(\int_{\gamma_{xq}}\alpha(t)\mathrm{d}\sigma (t)\right)\mathrm{d}\sigma (x),
\end{align*}

where $\gamma_{pq}$ is a geodesic curve connecting two points $p,q\in \partial M$ and $\alpha\geq 0$ denotes the absorption coefficient. The inverse problem of TFT is to determine $f$ from the knowledge of $\mathcal{I}_\alpha f$ on a subset $S\subset (\partial M\times \partial M)$. It can be shown, see, e.g., \cite{paternain2012, Sharafutdinov_1994}, that this problem is equivalent to computing the source term $f$ in the transport equation

\begin{align*}
\mathcal{H} u (x,\xi) + \alpha u (x,\xi) = f\cdot \xi^m,
\end{align*}

where $\mathcal{H}$ denotes the geodesic vector field corresponding to the metric $g$, $\xi\in T_x M$ is a tangent vector in $x$ and $\xi^m= \xi\otimes \cdots \otimes\xi$ is the $m$-fold tensor product of $\xi$. The ray transform $\mathcal{I}_\alpha f$ determines
the given boundary data of $u$. This formulation offers the possibility for a holistic approach to TFT in general settings,
i.e., taking absorption and refraction into account. It even extends to dynamic settings of the ray transform for 
time-depending tensor fields $f$,
\begin{align*}
[\mathcal{I}_{\alpha}^d f](t, x,\xi)=\int_{\tau_{-}(x,\xi)}^{0}\langle f(t+\tau, \gamma_{x,\xi}(\tau)),\dot{\gamma}_{x,\xi}^m (\tau)\rangle \exp\left(-\int_{\tau}^{0}\alpha(\gamma_{x,\xi}(\sigma),\dot{\gamma}_{x,\xi}(\sigma))\mathrm{d}\sigma\right) \mathrm{d}\tau.\\
\end{align*}
as we will show.


So far, only little research has been done on tensor tomography taking refraction into account. 
Among the applications of tensor tomography are diffraction tomography of deformations \cite{lionheart2015}, polarization tomography of quantum radiation \cite{karassiov2004} and the tomography of tensor fields of stresses of e.g. fiberglass composites \cite{puro2007}. In addition, there are polarization tomography \cite{Sharafutdinov_1994}, plasma diagnosis \cite{balandin} and photoelasticity \cite{puro2016}. Furthermore, novel methods exist, which are especially successful in biology and medicine. These include diffusion MRI tomography, which can be used to study the brain in detail. On the other hand, cross-polarized optical coherent tomography allows for a detailed examination of cells and is used for the diagnosis of cancer \cite{panin2002}. Due to the fact that the reconstruction of a tensor field of rank $m>1$ using one-dimensional data $\mathcal{I}_\alpha f$, $\mathcal{I}_\alpha^d f$ is obviously underdetermined, the ray transform must have a non-trivial null space. Decompositions of symmetric 2D tensor fields exist \cite{derevtsov_2015}, so it is possible to reconstruct them uniquely from longitudinal and transverse ray transforms. For higher dimensions there are no such decompositions yet, however one can define the mixed ray transforms in arbitrary dimensions \cite{Sharafutdinov_1994}.

In the publications mentioned above, Euclidean geometry is assumed. 
In \cite{uhlmann} and \cite{uhlmann2} tomography for refractive media is studied for the special case of scalar fields in a 2D domain. There, questions about the range of the ray transform as well as uniqueness and stability of the solution are studied. Results on vector and tensor tomography in Riemannian manifolds can be found in 
\cite{udoschuster, sharafutdinov1992curvature, STEFANOV;UHLMANN:04}. In \cite{STEFANOV;UHLMANN;VASY:14} the author prove local invertibility of the geodesic ray transform for tensor fields of order 1 and 2 near a strictly convex boundary point and present a reconstruction formula. A study on the influence of refraction to the reconstruction accuracy can be found in \cite{DEREVTSOV;ET;AL:00}.

Dynamic tomographic problems arise, e.g., in medical imaging, where artifacts caused by motion are to be corrected. The dynamic inverse problems can be regularized by a Tikhonov-Phillips method (c.f. \cite{louis2002theo}, \cite{louis2002appl}) or the method of approximate inverse \cite{hahn2015}. Motion compensation strategies are also investigated in \cite{BLANKE;ET;AL:20, hahn2014, hahn2017}. In \cite{hahnlouis2017} the relation between motion and resolution has been investigated. 

\textit{Our contributions:} We first prove that the integral representations $\mathcal{I}_{\alpha} f$, $\mathcal{I}_{\alpha}^d f$ satisfy specific boundary-, respectively initial-boundary-value problem for transport equations. Subsequently we investigate existence and uniqueness of weak solutions. It will be shown that these problems lack of uniqueness since the corresponding bilinear form is not $H^1$-coercive. As a remedy we turn over to viscosity solutions for which we are able to prove unique existence under a certain, mild additional condition to the refractive index $n(x)$. Numerical evaluations show that the viscosity solutions are appropriate approximations to original solutions. The results are of utmost importance for solving tensor tomography problems in fairly general settings, since such problems can be interpreted as inverse source problems for transport equations. The results of this article then imply that the corresponding forward mappings are well-defined.


\section{Geodesic differential equation and ray transforms on a CDRM}


First we want to model our problem. For this purpose we define corresponding spaces (c.f. \cite{uhlmann},\cite{uhlmann2}) and specify how the course of a ray within a medium can be inferred unambiguously on the basis of the refractive index.

According to Fermat's principle a signal propagates along the path with shortest travel time. This implies that we are able to interpret the ray as geodesic curve associated with a Riemannian metric which is generated by the refractive index $n(x)$. Allover this manuscript we assume that 

\[  n(x) \geq c_n > 0\qquad \mbox{a.e.}   \]

for a positive constant $c_n$. Let $\gamma:[a,b]\rightarrow \mathbb{R}^3$ be a smooth curve. The time a signal needs to propagate from its initial point $\gamma(a)$ to to $\gamma(b)$ is given by

\begin{align*}
T(\gamma(a),\gamma(b))&= \int_\gamma n(x)\mathrm{d}\sigma(x)=\int_a^b n(\gamma(t))||\dot{\gamma}(t)||\mathrm{d}t=\int_a^b \sqrt{n^2(\gamma(t))||\dot{\gamma}(t)||^2}\mathrm{d}t=\int_\gamma \mathrm{d}s,\\
\end{align*}

where $n(x)$ is the refractive index of the considered medium and $\mathrm{d}s^2 = n^2(x)||\mathrm{d}x||^2$ is the length element of the Riemannian metric $g$ with
\begin{align}\label{g_n}
g_{ij}(x)=n^2(x)\delta_{ij},\quad 1\leq i,j\leq 3.
\end{align}
Here we used Einstein's summation convention meaning that we sum up over double indices. In $(\mathbb{R}^3,g)$ for a given function $f:\mathbb{R}^3\rightarrow\mathbb{R}$ the gradient reads as $\nabla f = g^{ij}\partial_i f \partial_j = n^{-2}(x) \nabla_{eucl}f$, where $g^{ij}$ are the entries of the inverse of $g_{ij}$, and for tangential vectors $u,v\in \mathbb{R}^3$ the inner product is given as $\langle u,v\rangle  = g_{ij}u_i v_j$ and thus $||u|| = n(x)||u||_{eucl}$. For details we refer to \cite{petersen}.

A curve $\gamma$ minimizing $T(\gamma(a),\gamma(b))$ is a geodesic of $g$. Such a curve satisfies the $\textit{geodesic differential equation}$ given by

\begin{align*}
\ddot{\gamma}_k+ \Gamma_{ij}^k(\gamma) \dot{\gamma}_i \dot{\gamma}_j = 0.\\
\end{align*}

The Christoffel symbols $\Gamma_{ij}^{k}$ are defined by 
\begin{align*}
\Gamma_{ij}^{k}(x)&=\frac{1}{2}g^{kp}(x)\left(\frac{\partial g_{ip}}{\partial x^j}(x)+\frac{\partial g_{jp}}{\partial x^i}(x)-\frac{\partial g_{ij}}{\partial x^p}(x)\right).\\
\end{align*}

In case of the metric \eqref{g_n} we compute

\begin{align}
\Gamma_{ij}^{k}(x)= n^{-1}(x)\left( \frac{\partial n}{\partial x_j} (x) \delta_{ik} + \frac{\partial n}{\partial x_i} (x) \delta_{jk} - \frac{\partial n}{\partial x_k}(x)\delta_{ij}\right).\label{use}\\\nonumber
\end{align}

Initializing at time $t=0$ the starting point and tangential vector, i.e. setting $\gamma(0)=x$, $\dot{\gamma}(0)=\xi$ and denoting such a $\gamma$ as $\gamma=\gamma_{x,\xi}$ gives the following theorem:

\begin{theorem}
Let $(M,g)$ be a compact Riemannian manifold in $\mathbb{R}^3$ and $n\in C^2(M)$. Then the following initial value system has a unique solution:

\begin{align*}
\ddot{\gamma}_k+ \Gamma_{ij}^k(\gamma) \dot{\gamma}_i \dot{\gamma}_j = 0,\qquad \gamma(0)=x,\dot{\gamma}(0)=\xi.
\end{align*}
\end{theorem}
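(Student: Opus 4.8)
The plan is to recast the second-order geodesic system as an autonomous first-order system on phase space and then invoke the Picard--Lindel\"of theorem. I would introduce the velocity variable $v=\dot\gamma$ and rewrite the equation as
\begin{align*}
\dot\gamma_k &= v_k,\\
\dot v_k &= -\,\Gamma_{ij}^{k}(\gamma)\,v_i v_j ,
\end{align*}
with initial data $(\gamma(0),v(0))=(x,\xi)$. Setting $y=(\gamma,v)$ and $F(\gamma,v)=\bigl(v,\,-\Gamma_{ij}^{k}(\gamma)v_iv_j\bigr)$ this reads $\dot y=F(y)$ on the phase space $M\times\mathbb{R}^3$, so local existence and uniqueness for the geodesic initial value problem will follow as soon as $F$ is shown to be locally Lipschitz.

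The key step is to verify this regularity, and here the hypotheses enter decisively. By the explicit representation \eqref{use} the Christoffel symbols are assembled from the factor $n^{-1}$ and the first partial derivatives $\partial n/\partial x_j$. Since $n\in C^2(M)$ and $n\ge c_n>0$ everywhere, the reciprocal $n^{-1}$ is again of class $C^2$ and each $\partial n/\partial x_j$ is of class $C^1$; hence $\Gamma_{ij}^{k}\in C^1(M)$. Consequently $F$ is continuously differentiable in $\gamma$ (through $\Gamma$) and polynomial, therefore smooth, in $v$. Being $C^1$, the field $F$ is locally Lipschitz on $M\times\mathbb{R}^3$, and the Picard--Lindel\"of theorem then provides an $\varepsilon>0$ together with a unique $C^2$ curve $\gamma=\gamma_{x,\xi}$ solving the system on $(-\varepsilon,\varepsilon)$.

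The main obstacle I anticipate is that $F$ is only locally Lipschitz and, moreover, quadratic in $v$, so Picard--Lindel\"of a priori yields only a short-time solution and no immediate control on the maximal interval of existence. To obtain the geodesic on the full interval needed for the ray transform I would exploit that the geodesic flow preserves speed: differentiating $\langle\dot\gamma,\dot\gamma\rangle=g_{ij}\dot\gamma_i\dot\gamma_j$ along a solution and using the geodesic equation together with the symmetry $\Gamma_{ij}^{k}=\Gamma_{ji}^{k}$ shows that $\|\dot\gamma\|_g$ is constant in $t$. Hence $v(t)$ stays on a fixed sphere of the metric, and since $M$ is compact the solution cannot escape to infinity in finite time; by the standard continuation argument it therefore extends uniquely until $\gamma$ reaches $\partial M$. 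If only the purely local statement is intended, the first two paragraphs already suffice.
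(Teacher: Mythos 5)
Your proposal is correct and its core coincides with the paper's proof: both reduce the geodesic equation to the first-order system $\dot\gamma=v$, $\dot v_k=-\Gamma_{ij}^k(\gamma)v_iv_j$ and invoke Picard--Lindel\"of. You differ in how the Lipschitz hypothesis is verified and in how far the solution is carried. The paper computes $-\Gamma_{ij}^k(\gamma)\dot\gamma_i\dot\gamma_j$ explicitly for the metric $g_{ij}=n^2\delta_{ij}$ (equation \eqref{christ_refractive}) and then differentiates this expression in $\gamma_l$ and $\dot\gamma_l$, concluding boundedness of the gradient from $n>0$ and $n\in C^2$; you instead argue abstractly that $n\in C^2$ and $n\ge c_n>0$ give $\Gamma_{ij}^k\in C^1$, hence $F\in C^1$ and locally Lipschitz. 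Your route is shorter and works for any $C^1$ Christoffel symbols, while the paper's computation has the side benefit of producing the formula \eqref{christ_refractive} that is reused later (e.g.\ in Proposition \ref{prop} and the continuity estimate \eqref{last_cont}). More substantively, your third paragraph addresses a point the paper glosses over: since the right-hand side is quadratic in $\dot\gamma$, its gradient is bounded only on bounded sets of $\dot\gamma$ (the derivatives the paper computes are linear, respectively quadratic, in $\dot\gamma$), so Picard--Lindel\"of by itself yields only a short-time solution. Your observation that $\|\dot\gamma\|_g$ is conserved along geodesics (by metric compatibility, or directly from the symmetry $\Gamma_{ij}^k=\Gamma_{ji}^k$), combined with $n\ge c_n$ and compactness of $M$, gives the a priori bound needed for the standard continuation argument up to $\partial M$ --- exactly what the CDRM setting and the definition of $\tau_\pm(x,\xi)$ require. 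In this respect your proof is slightly more complete than the paper's, which implicitly treats the local statement only.
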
 

\begin{proof}
The proof works similarly to the one in \cite{udoschuster} for two dimensions. First, we write the second-order ODE into a system of first-order ODEs. 
We set $\Gamma_{ij}(x)=(\Gamma_{ij}^1(x), \Gamma_{ij}^2(x), \Gamma_{ij}^3(x))$ and obtain

\begin{align*}
\left\{\begin{array}{ll}
	\frac{\mathrm{d}}{\mathrm{d}t}\gamma(t)&= \dot{\gamma}(t) \\
	\frac{\mathrm{d}}{\mathrm{d}t}\dot{\gamma}(t)&= - \Gamma_{ij} (\gamma(t))\dot{\gamma}_i(t)			    \dot{\gamma}_j(t)\\
    \gamma(0)&=x\\
    \dot{\gamma}(0)&=\xi   
    	\end{array}\right. .
\end{align*}

 
According to Picard-Lindel\"of's theorem (c.f. \cite{walter}) and the mean value theorem, it is sufficient to show that the gradient of $(\dot{\gamma}(t), - \Gamma_{ij} (\gamma(t))\dot{\gamma}_i(t)			    \dot{\gamma}_j(t))$ with respect to $z=(\gamma,\dot{\gamma})$ remains bounded. Obviously we have

\begin{align*}
\frac{\mathrm{d}}{\mathrm{d}\gamma_l}\dot{\gamma}_k=0,\qquad \frac{\mathrm{d}}{\mathrm{d}\dot{\gamma}_l}\dot{\gamma}_k = \delta_{kl},\qquad k,l=1,2,3.
\end{align*}

Next, we divide the sum in \eqref{use} into the following cases:



\begin{align*}
i=j=k && i=k\neq j\\
i=j\neq k && i\neq j=k.\\
\end{align*}

The case where all indices are different vanishes and can be neglected. We obtain for $k=1,2,3$

\begin{align}\label{christ_refractive}
-\Gamma_{ij}^{k}(\gamma)\dot{\gamma}_i \dot{\gamma}_j 
&= -n^{-1}(\gamma)\left(\frac{\partial n}{\partial x_k} (\gamma)\dot{\gamma}_k^2 + \sum_{i=k\neq j}\frac{\partial n}{\partial x_j}(\gamma)\dot{\gamma}_i \dot{\gamma}_j + \sum_{i\neq k}(-\frac{\partial n}{\partial x_k}(\gamma))\dot{\gamma}_i^2 + \sum_{i\neq j = k} \frac{\partial n}{\partial x_i}(\gamma) \dot{\gamma}_i \dot{\gamma}_j\right)\nonumber\\
&=  -n^{-1}(\gamma)\left(2\frac{\partial n}{\partial x_k}(\gamma)\dot{\gamma}_k^2 + 2\sum_{i=k\neq j}\frac{\partial n}{\partial x_j}(\gamma)\dot{\gamma}_i \dot{\gamma}_j + \sum_{i}\left(-\frac{\partial n}{\partial x_k}(\gamma)\right)\dot{\gamma}_i^2\right)\nonumber\\
&= n^{-1}(\gamma)\left(n^{-2}(x)\frac{\partial n}{\partial x_k}(\gamma)||\dot{\gamma}|| ^2 - 2\frac{\partial n}{\partial x_k}(\gamma)\dot{\gamma}_k^2 - 2\sum_{j\neq k} \frac{\partial n}{\partial x_j}(\gamma)\dot{\gamma}_k \dot{\gamma}_j\right)\nonumber\\
&= n^{-1}(\gamma)\left(n^{-2}(x)\frac{\partial n}{\partial x_k}(\gamma)||\dot{\gamma}|| ^2  - 2\sum_{j} \frac{\partial n}{\partial x_j}(\gamma)\dot{\gamma}_k \dot{\gamma}_j\right)\nonumber\\
&= n^{-1}(\gamma)\left(n^{-2}(\gamma)\frac{\partial n}{\partial x_k} (\gamma)||\dot{\gamma}|| ^2  - 2\dot{\gamma}_k \langle\nabla n(\gamma), \dot{\gamma}\rangle \right).\\\nonumber
\end{align}

Thus we get for $l=1,2,3$

\begin{align*}
\frac{\mathrm{d}}{\mathrm{d}\dot{\gamma}_l}(-\Gamma_{ij}^{k}(\gamma)\dot{\gamma}_i \dot{\gamma}_j)
&= n^{-1}(\gamma)\left( 2n^{-2}(\gamma)\frac{\partial n}{\partial x_k}(\gamma)\dot{\gamma}_l - 2\dot{\gamma}_k  \frac{\partial n}{\partial x_l} (\gamma) -2\delta_{k,l} \langle \nabla n(\gamma),\dot{\gamma}\rangle \right)\\
\frac{\mathrm{d}}{\mathrm{d}\gamma_l}(-\Gamma_{ij}^{k}(\gamma)\dot{\gamma}_i \dot{\gamma}_j)
&= -\frac{\partial n}{\partial x_l}(\gamma) n^{-2}(\gamma)\left(3n^{-2}(x)\frac{\partial n}{\partial x_k}(\gamma)||\dot{\gamma}|| ^2  - 2\dot{\gamma}_k \langle\nabla n(\gamma), \dot{\gamma}\rangle  \right)
\\&+ n^{-1}(\gamma)\left( n^{-2}(x)\frac{\partial^2 n}{\partial x_k \partial x_l}(\gamma) ||\dot{\gamma}|| ^2 - 2\dot{\gamma}_k \langle \nabla \left(\frac{\partial n}{\partial x_l}(\gamma) \right), \dot{\gamma}\rangle    \right).\\
\end{align*}

Since $n>0$ and all the derivatives are bounded it follows the asserted statement.
\end{proof}


Hence, waves in $(M,g)$ with a smooth refractive index propagate along geodesics that are uniquely defined by the initial point and direction. For completeness we state the definition of a compact dissipative Riemannian manifold.

\begin{dfn}
Let $M\subset \mathbb{R}^d$ be a compact manifold and $g$ a Riemannian metric with strictly convex boundary $\partial M$.
If for every given point $x\in M$ and non-zero vector $\xi$ in its tangent space $T_x M$
the geodesic $\gamma_{x,\xi}(t)$ cannot be extended further than to a finite interval $[\tau_{-}(x,\xi),\tau_{+}(x,\xi)]$, 
then we call $(M,g)$ a \textit{compact dissipative Riemannian manifold} (CDRM).
\end{dfn}

In a CDRM all geodesics have a finite length. The interval limits can be characterized by

\begin{align*}
\tau_{-}(x,\xi)&=\max \lbrace \tau\in (-\infty,0]: \gamma_{x,\xi}(t)\cap \partial M \neq \emptyset\rbrace\\
\tau_{+}(x,\xi)&=\min \lbrace \tau\in [0, \infty): \gamma_{x,\xi}(t)\cap \partial M \neq \emptyset\rbrace.\\
\end{align*}

Hence, $\gamma_{x,\xi}(\tau_{\mp}(x,\xi))$ are entry and exit point of a geodesic that is for $\tau=0$ at position $x$ and moves in direction $\xi\in T_x M$.

\begin{figure}[ht]
	\centering
  \includegraphics[width=0.6\textwidth]{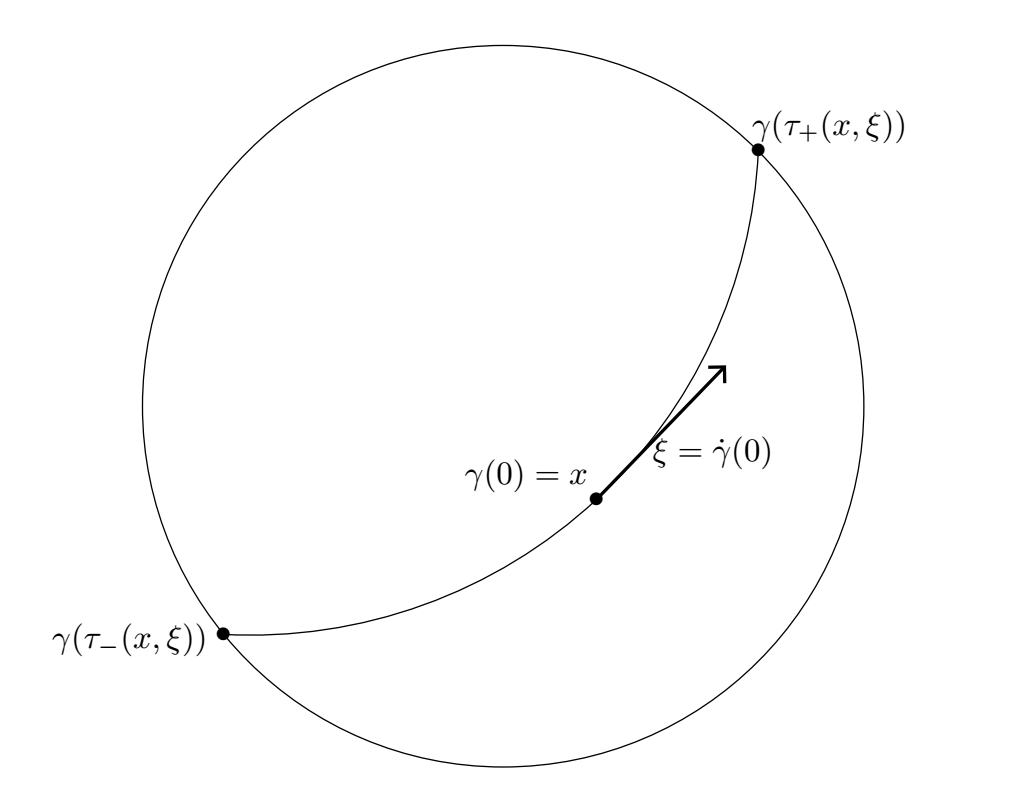}
	\caption{Sketch of a geodesic curve with parametrization}
	\label{fig1}
\end{figure}

We denote the tangent bundle of the manifold $M$ by

\begin{align*}
TM=\lbrace(x,\xi)|x\in M, \xi\in T_x M\rbrace
\end{align*}

and its submanifold consisting of unit vectors by

\begin{align*}
\Omega M=\lbrace (x,\xi)\in TM| ||\xi||  = 1\rbrace.
\end{align*}

Furthermore we introduce

\begin{align*}
T^0 M&=\lbrace (x,\xi)\in TM| \xi\neq 0\rbrace,\\
\partial_{\pm}\Omega M &= \lbrace (x,\xi)\in \Omega M|x\in \partial M; \pm\langle \xi,\nu(x)\rangle\geq 0\rbrace.
\end{align*}

Note that $\partial_{+} \Omega M$ and $\partial_{-} \Omega M$ are compact manifolds and

\begin{align*}
\partial_{+} \Omega M\cap\partial_{-} \Omega M = \Omega M\cap T(\partial M).
\end{align*}

Using the implicit function theorem and the strict convexity of $M$ implies that $\tau_{\pm}$ are smooth on $T^{0}M\backslash T(\partial M)$.




Without loss of generality we assume that $f$ is supported in the unit sphere and set $M:=\lbrace x\in \mathbb{R}^3: ||x||_{eucl}\leq 1\rbrace$.
Given an integer $m\geq 0$, we denote by $S^m$ the space of all functions $\underbrace{\mathbb{R}^3\times\dots\times\mathbb{R}^3}_{m\textit { factors}}\rightarrow \mathbb{R}$ that are $\mathbb{R}$-linear and invariant with respect to all transpositions of the indices. Moreover, we define $\tau_M=(TM, p, M)$ as the tangent bundle and $\tau_{M}'=(T'M, p', M)$ as the cotangent bundle on $M$, where $p:TM\to M$, $p':T'M\to M$ are corresponding projections to $M$, $M'$, respectively. For nonnegative integers $r$ and $s$, we set $\tau_s^r M= (T_s^r M, p_s^r, M)$ as the vector bundle defined by 

\begin{align*}
\tau_r^s M=\underbrace{\tau_{M}\otimes \dots \tau_M}_{r \textrm{ times}} \otimes \underbrace{\tau_{M}'\otimes\dots \otimes \tau_{M}'}_{s \textrm{ times}}.\\
\end{align*}

We denote the subbundle of $\tau_{m}^{0} M$ consisting of all tensors that are symmetric in all arguments by $S^m \tau_M '$. 

\begin{dfn}\label{D-ART}
For given $\alpha\in L^{\infty}(\Omega M)$ we define the $\textit{attenuated ray transform}$ of a $m$-tensor field $f=(f_{i_1\cdots i_m})$ by the function $\mathcal{I}_{\alpha} f:L^2(S^m \tau_M ')\rightarrow L^2(\partial_+ \Omega M)$, where

\begin{align*}
[\mathcal{I}_{\alpha} f](x,\xi)=\int_{\tau_{-}(x,\xi)}^{0}\langle f(\gamma_{x,\xi}(\tau)),\dot{\gamma}_{x,\xi}^m (\tau)\rangle \exp\left(-\int_{\tau}^{0}\alpha(\gamma_{x,\xi}(\sigma),\dot{\gamma}_{x,\xi}(\sigma))\mathrm{d}\sigma\right) \mathrm{d}\tau.\\
\end{align*}
\end{dfn}

This definition can be extended to time-depending tensor fields. 

\begin{dfn}\label{D-DART}
For given $\alpha\in L^{\infty}(\Omega M)$ we define the $\textit{dynamic attenuated ray transform}$ of a $m$-tensor field $(f=f_{i_1\cdots i_m})$ by the function $\mathcal{I}_{\alpha}^d f:L^2(0,T; L^2(S^m \tau_M '))\rightarrow L^2(0,T; L^2(\partial_+ \Omega M))$ where

\begin{align*}
[\mathcal{I}_{\alpha}^d f](t, x,\xi)=\int_{\tau_{-}(x,\xi)}^{0}\langle f(t+\tau, \gamma_{x,\xi}(\tau)),\dot{\gamma}_{x,\xi}^m (\tau)\rangle \exp\left(-\int_{\tau}^{0}\alpha(\gamma_{x,\xi}(\sigma),\dot{\gamma}_{x,\xi}(\sigma))\mathrm{d}\sigma\right) \mathrm{d}\tau.\\
\end{align*}
\end{dfn}

In definitions \ref{D-ART} and \ref{D-DART} the function $\alpha$ acts as attenuation coefficient which is assumed to be known.

For further investigations it is necessary to introduce Bochner spaces $L^2(0,T; L^2(S^m \tau_M '))$ and $L^2(0,T; L^2(\partial_+ \Omega M))$ with norms

\begin{align}
||f||_{L^2(0,T; L^2(S^m \tau_M '))}&= \left(\int_{0}^{T}\int_M \langle f(\tau,x), f(\tau,x)\rangle \mathrm{d}V \mathrm{d}\tau\right)^{\frac{1}{2}}\\
||u||_{L^2(0,T; L^2(\partial_+ \Omega M))}&=\left(\int_{0}^{T} ||u(t)||_{L^2(\partial_+ \Omega M)}^2\mathrm{d}t\right)^{\frac{1}{2}}.
\end{align}

Analogously, Sobolev-Bochner spaces $H^k(0,T ; L^2(S^m \tau_M '))$ and $H^k (0,T; L^2(\partial_+ \Omega M))$ can be defined for all $k\in\mathbb{N}$. The ray transform on a CDRM can be continuously extended to

\begin{align*}
\mathcal{I}:H^k(0, T; H^k(S^m \tau_M '))\rightarrow H^k(0,T; H^k(\partial_+ \Omega M)).
\end{align*}

In \cite{Sharafutdinov_1994} it is proven that this linear operator is bounded if the tensor field is static. From this it can be easily concluded that the following applies to dynamic tensor fields and for any integer $k\geq 0$:

\begin{align*}
||\mathcal{I}f||_{H^k(0,T;H^k(\partial_+ \Omega M))}\leq ||f||_{H^k(0, T; H^k(S^m \tau_M '))}.
\end{align*}

The inverse problems that we focus on are to recover $f$ from given data $\mathcal{I}_{\alpha}^d f$, respectively $\mathcal{I}_{\alpha}^d f$, but not by inverting the integral transforms. Rather we consider inverse source problems for corresponding transport equations, which we will investigate further on.


\section{The ray transforms as solutions of transport equations}


\subsection{Derivation of the transport equation}


Given $\alpha\in L^{\infty}(\Omega M)$, $\alpha\geq 0$, and a $m$-tensor field $f=(f_{i_1\cdots i_m})\in L^2(0,T; L^2(S^m \tau_M '))$, we define the function $u:[0,T]\times T^0 M\rightarrow \mathbb{R}$ by

\begin{align}\label{solu}
u(t, x,\xi)=\int_{\tau_{-}(x,\xi)}^{0} f_{i_1,\dots, i_m}(t+\tau, \gamma_{x,\xi}(\tau))\dot{\gamma}_{x,\xi}^{i_1}\cdots\dot{\gamma}_{x,\xi}^{i_m} (\tau)\exp\left(-\int_{\tau}^{0}\alpha(\gamma_{x,\xi}(\sigma),\dot{\gamma}_{x,\xi}(\sigma))\mathrm{d}\sigma\right) \mathrm{d}\tau\\\nonumber
\end{align}

as an extension of $\mathcal{I}_{\alpha}^{d}f$ to $T^0 M$. We observe that for $(x,\xi)\in \partial_{-}\Omega M$ this integral vanishes whereas for $(x,\xi)\in \partial_{+}\Omega M$ it is identical to $\mathcal{I}_\alpha^d f$.

We show that \eqref{solu} is a solution of a transport equation. This is an extension of Sharafutdinov's result in \cite{Sharafutdinov_1994} to time-dependent fields with absorption. For constant refractive index $n$ a similar result is found in \cite{derevtsov}.

Let $(x,\xi)\in T^0 M\backslash T(\partial M)$ and $\gamma=\gamma_{x,\xi}:[\tau_{-}(x,\xi),\tau_{+}(x,\xi)]\rightarrow M$ be a geodesic defined by the initial conditions $\gamma_{x,\xi}(0)=x$, and $\dot{\gamma}_{x,\xi}(0)=\xi$. We choose a sufficiently small $s\in \mathbb{R}$ and put $t_s = t+s$, $x_{s}=\gamma(s)$ and $\xi_{s}=\dot{\gamma}(s).$  Then $\gamma_{x_{s},\xi_{s}}(\tau)=\gamma(\tau+s)$ and $\tau_{-}(x_{s},\xi_{s})=\tau_-(x,\xi)- s$ yielding 

\begin{align}
&u(t+s, x_{s},\xi_{s})\nonumber\\
&=\int_{\tau_{-}(x_{s},\xi_{s})}^{0} f_{i_1,\dots,i_m}(t_s+\tau,\gamma_{x_{s},\xi_{s}}(\tau))\dot{\gamma}_{x_{s},\xi_{s}}(\tau)^{i_1}\cdots\dot{\gamma}_{x_{s},\xi_{s}}(\tau)^{i_m}\exp\left(-\int_{\tau}^{0}\alpha(\gamma_{x_{s},\xi_{s}}(\sigma),\dot{\gamma}_{x_{s},\xi_{s}}(\sigma))\mathrm{d}\sigma\right)\mathrm{d}\tau\nonumber\\
&= \int_{\tau_{-}(x,\xi)}^{s}f_{i_1,\dots,i_m}(t+\tau, \gamma_{x,\xi}(\tau))\dot{\gamma}_{x,\xi}(\tau)^{i_1}\cdots\dot{\gamma}_{x,\xi}(\tau)^{i_m}\exp\left(-\int_{\tau}^{s}\alpha(\gamma_{x,\xi}(\sigma),\dot{\gamma}_{x,\xi}(\sigma))\mathrm{d}\sigma\right)\mathrm{d}\tau.\label{transport2}\\\nonumber
\end{align}

Next we differentiate this equation with respect to $s$ and evaluate it at $s=0$. We obtain for the left-hand side

\begin{align*}
\frac{\partial u}{\partial t} + \dot{\gamma}_k (0)\frac{\partial u}{\partial x_k} + \ddot{\gamma}_k(0)\frac{\partial u}{\partial \xi_k}
&=\frac{\partial u}{\partial t} + \dot{\gamma}_k(0)\frac{\partial u}{\partial x_k} -\Gamma_{ij}^{k}(\gamma(0))\dot{\gamma}_i(0)\dot{\gamma}_j(0)\frac{\partial u}{\partial \xi_k}\\
&=\frac{\partial u}{\partial t} + \langle \nabla_x u,\xi\rangle  - \Gamma_{ij}^{k}(x)\xi_i\xi_j\frac{\partial u}{\partial \xi_k}\\
&=\frac{\partial u}{\partial t} +  \mathcal{H}u,\\
\end{align*}

where $\mathcal{H}u := \langle \nabla_x u,\xi\rangle  - \Gamma_{ij}^{k}(x)\xi_i\xi_j\frac{\partial u}{\partial \xi_k}$ denotes the geodesic vector field. For brevity we define

\begin{align*}
U(\tau)&=f_{i_1,\dots,i_m}(t+\tau, \gamma_{x,\xi}(\tau))\dot{\gamma}_{x,\xi}^{i_1}(\tau)\cdots \dot{\gamma}_{x,\xi}^{i_m}(\tau),\\
V(\tau,s)&=\exp\left(-\int_{\tau}^{s}\alpha(\gamma_{x,\xi}(\sigma),\dot{\gamma}_{x,\xi}(\sigma))\mathrm{d}\sigma\right).\\
\end{align*}

Then the right hand side of \eqref{transport2} reads as

\begin{align*}
\int_{\tau_{-}(x,\xi)}^{s}U(\tau) V(\tau,s)\mathrm{d}\tau.\\
\end{align*}

Let us define $W(\tau,s)$ as an antiderivative of $U(\tau)V(\tau,s)$ with respect to $t$, i.e.

\begin{align*}
W(\tau,s)=\int U(\tau)V(\tau,s)\mathrm{d}\tau.\\
\end{align*}

Because of $\alpha\geq 0$ the function $V$ is bounded with $V(\tau,s)\leq 1$ and we obtain by the boundedness of $[\tau_{-}(x,\xi),0]$

\begin{align*}
\frac{\partial}{\partial s}W(\tau,s)=\int U(\tau)\frac{\partial V}{\partial s}(\tau,s)\mathrm{d}\tau.\\
\end{align*}

Hence,

\begin{align*}
\frac{\mathrm{d}}{\mathrm{d}s}\int_{\tau_{-}(x,\xi)}^{s}U(\tau) V(\tau,s)\mathrm{d}\tau
&= \frac{\mathrm{d}}{\mathrm{d}s}W(s, s)-\frac{\partial}{\partial s}W(\tau,s)\\
&=\frac{\partial W}{\partial \tau}(\tau,s)\Big\vert_{\tau=s} + \frac{\partial W}{\partial s}(\tau,s)\Big\vert_{\tau=s}-\frac{\partial}{\partial s}W(\tau,s)\\
&=U(s)\underbrace{V(s,s)}_{=1} +\int_{\tau_{-}(x,\xi)}^{s}U(\tau)\frac{\partial V}{\partial s}	(\tau,s)\mathrm{d}\tau\\
&=U(s) + \int_{\tau_{-}(x,\xi)}^{s}U(\tau)\frac{\partial V}{\partial s}(\tau,s)\mathrm{d}\tau .\\
\end{align*}

Using that

\begin{align*}
\frac{\partial V}{\partial s}(\tau,s)=-\alpha( \gamma_{x,\xi}(s),\dot{\gamma}_{x,\xi}(s)) V(\tau,s),\\
\end{align*}

we get 

\begin{align*}
\lim\limits_{s\rightarrow 0} \frac{\mathrm{d}}{\mathrm{d}s}\int_{\tau_{-}(x,\xi)}^{s}U(\tau) V(\tau,s)\mathrm{d\tau}
&= f_{i_1,\dots,i_m}(t, \gamma_{x,\xi}(0))\dot{\gamma}_{x,\xi}^{i_1}(0)\cdots\dot{\gamma}_{x,\xi}^{i_m}(0) - \alpha(x,\xi)\int_{\tau_{-}(x,\xi)}^{0}U(\tau)V(\tau,0)\mathrm{d}\tau \\
&= f_{i_1,\dots,i_m}(t, x)\xi^{i_1}\cdots\xi^{i_m} - \alpha(x,\xi)u(t, x,\xi).\\
\end{align*}

Finally, we arrive at

\begin{align}\label{trans_dyn}
\left(\frac{\partial }{\partial t}+\mathcal{H}+\alpha(x,\xi)\right)u(t,x,\xi)= f_{i_1,\dots,i_m}(t,x)\xi^{i_1}\cdots\xi^{i_m}.\\\nonumber
\end{align}

Note that furthermore $u$ satisfies the boundary conditions 

\begin{align}\label{bc_dyn}
u(t, x,\xi) = 
\left\{\begin{array}{ll}\mathcal{I}_\alpha^d f(t,x,\xi)=:\phi(t,x,\xi), & (x,\xi)\in \partial_+ \Omega M, t\in [0,T]\\
0, & (x,\xi)\in \partial_- \Omega M, t\in [0,T]\end{array}\right. .
\end{align}

In view of \eqref{solu} a natural initial value for $u$ is given by

\begin{align}\label{ic_dyn}
u(0,x,\xi) = 0
\end{align}

assuming that there is no flow $f$ for $t<0$.

For static tensor fields $f$ \eqref{trans_dyn} and \eqref{bc_dyn} turn into

\begin{align}\label{trans}
\left(\mathcal{H}+\alpha(x,\xi)\right)u(x,\xi)= f_{i_1,\dots,i_m}(x)\xi^{i_1}\cdots\xi^{i_m}.\\\nonumber
\end{align}

and 

\begin{align}\label{bc}
u(x,\xi) = \left\{\begin{array}{ll}\phi(x,\xi), & (x,\xi)\in \partial_+ \Omega M\\
0, & (x,\xi)\in \partial_- \Omega M\end{array}\right. \\\nonumber
\end{align}

for given $\phi= \mathcal{I}_\alpha f$. 

The inverse problems of computing $f$ from $\mathcal{I}_\alpha^d f$, $\mathcal{I}_\alpha f$, respectively, can now be re-formulated as inverse source problems for \eqref{trans_dyn}, \eqref{trans}: Compute $f$ from 
$\phi$ under the constraints \eqref{trans_dyn} and \eqref{bc_dyn}, \eqref{trans} and \eqref{bc}, respectively.
In this view it is very important that the parameter-to-solution map $f\mapsto u$ is well-defined what means that
the initial-, boundary-value problems have unique solutions. It turns out that indeed this is not satisfied. As a remedy we consider viscosity solutions. This is subject of the following section.


\subsection{Uniqueness of viscosity solutions for static tensor fields $f$}


We address existence and uniqueness of weak solutions for \eqref{trans_dyn} given the boundary and initial conditions \eqref{bc_dyn}, \eqref{ic_dyn}.

Let us first confine to static fields $f$. To derive the weak formulation of \eqref{trans} we multiply both sides by a test function $v\in H_0^1 (\Omega M)$ and integrate over $\Omega M$. Let $\hat{\phi}$ be a $H^1(\Omega M)$-extension, i.e. $\gamma_+ \hat{\phi}=\phi$, where 

\begin{align*}
\gamma_+ : H^1(\Omega M)\rightarrow L^2(\partial_+ \Omega M)
\end{align*}

denotes the trace operator restricting a function from $H^1(\Omega M)$ to $\partial_+ \Omega M$. Then the function $\tilde{u}= u - \hat{\phi}$ is in $H_0^1(\Omega M)$ and solves

\begin{align*}
(\mathcal{H}+\alpha)\tilde{u}=  f_{i_1,\dots,i_m}\xi^{i_1}\cdots\xi^{i_m} - (\mathcal{H}+\alpha)\hat{\phi}.
\end{align*}

This results in the weak formulation:

\fbox{\parbox{\columnwidth}{
Find $u_\phi = \tilde{u} + \hat{\phi}\in H^1 (\Omega M)$ such that

\begin{align*}
a(\tilde{u},v)=b_{\phi}(v),\qquad v \in H_0^1(\Omega M)
\end{align*}

where the bilinear form $a:H^1(\Omega M)\times H^1(\Omega M)\rightarrow \mathbb{R}$ is given as

\begin{align*}
a(u,v):= \int_{\Omega M} \left(\langle\nabla_x u,\xi\rangle  v - \Gamma_{ij}^{k}(x)\xi^i \xi^j \frac{\partial u}{\partial\xi^k}v +\alpha uv\right)\mathrm{d}\Sigma
\end{align*}

and the linear functional $b_\phi: H^1(\Omega M)\rightarrow \mathbb{R}$ as

\begin{align*}
b_\phi(v):= \int_{\Omega M} f_{i_1,\dots,i_m}\xi^{i_1}\cdots\xi^{i_m} v\mathrm{d}\Sigma - a(\hat{\phi},v).
\end{align*}
}}\\ \\

The bilinear form $a$ is not $H^1$-coercive, which is important to prove uniqueness of a weak solution according to standard results such as \cite[Theorem 2.1]{Lions2}. To overcome this difficulty we turn over to \textit{viscosity solutions} (c.f. \cite{Lions}). The idea of viscosity solutions is to transform the transport equation into an elliptic equation by adding a small multiple of the Laplace-Beltrami operator to the first-order differential operator of the original equation. For the arising elliptic problem we are able to prove unique solvability by using the Lax-Milgram Theorem.

The Laplace-Beltrami operator in $(M,g)$ can be computed as (c.f. \cite{minak1949}): 



\begin{align*}
\Delta &= \frac{1}{\sqrt{det g}}\sum_{i,j}\left(\frac{\partial}{\partial x_i} \left(\sqrt{det g} g^{ij} \frac{\partial }{\partial x_j}\right) + \frac{\partial}{\partial \xi_i} \left(\sqrt{det g} g^{ij} \frac{\partial }{\partial \xi_j}\right)\right)\\
&= n^{-3}(x)\sum_{i,j}\left(\frac{\partial}{\partial x_i}\left(n(x) \frac{\partial}{\partial x_i}\right) + n(x)\frac{\partial^2}{\partial \xi_i^2}\right)\\
&= n^{-2}(x)\sum_{i}\left(\frac{\partial^2}{\partial x_i^2}+\frac{\partial^2}{\partial \xi_i^2}\right)  + n^{-3}(x)\sum_i \frac{\partial n}{\partial x_i}\frac{\partial}{\partial x_i}.
\end{align*}

We split the operator into $\Delta = \Delta_x + \Delta_\xi$ where

\begin{align*}
\Delta_x := n^{-2}(x)\sum_{i}\frac{\partial^2}{\partial x_i^2}+n^{-3}(x)\sum_i \frac{\partial n}{\partial x_i}\frac{\partial}{\partial x_i},\qquad
\Delta_{\xi} := n^{-2}(x)\sum_{i}\frac{\partial^2}{\partial \xi_i^2}.
\end{align*}


Since $||\xi||=1$ we have

\begin{align*}
||\xi|| = \sqrt{g_{ij}\xi_i \xi_j}  =1 \Leftrightarrow ||\xi||_{eucl} = n^{-1}(x).\\
\end{align*}

Hence, $\xi$ reads in spherical coordinates as

\begin{align*}
\xi = (\xi_1, \xi_2, \xi_3)= n^{-1}(x) \begin{pmatrix} \cos\varphi\sin\theta\\ \sin\varphi\sin\theta \\ \cos\theta\end{pmatrix}.\\
\end{align*}

Simple calculations show

\begin{align}\label{xi1}
\frac{\partial }{\partial\xi_1}&=n(x)\left(\cos\varphi\cos\theta\frac{\partial }{\partial \theta} -\frac{\sin\varphi}{\sin\theta}\frac{\partial }{\partial \varphi}\right)\\
\frac{\partial }{\partial \xi_2} &= n(x)\left(\sin\varphi\cos\theta  \frac{\partial }{\partial \theta} + \frac{\cos\varphi}{\sin\theta} \frac{\partial }{\partial \varphi}\right)\label{xi2}\\
\frac{\partial }{\partial \xi_3} &= -n(x)\sin\theta \frac{\partial }{\partial \theta}.\label{xi3}\\\nonumber
\end{align}

%

The next step is to characterize the measure $\mathrm{d}\Sigma$ on $\Omega M$ by means of spherical coordinates. It holds that

\begin{align*}
\mathrm{d}\xi_1 &=n^{-1}(x)\left( -\sin\varphi\sin\theta \mathrm{d}\varphi + \cos\varphi\cos\theta \mathrm{d}\theta   \right)\\
\mathrm{d}\xi_2 &=n^{-1}(x)\left( \cos\varphi\sin\theta \mathrm{d}\varphi + \sin\varphi\cos\theta \mathrm{d}\theta   \right)\\
\mathrm{d}\xi_3 &=n^{-1}(x)\left( -\sin\theta  \mathrm{d}\theta   \right).\\
\end{align*}

Using the formula $3.6.33$ in \cite{Sharafutdinov_1994} we obtain

\begin{align}\label{surface_element}
\mathrm{d}\omega_x(\xi) &= g^{1/2}\sum_{i=1}^{3}(-1)^{i-1}\xi_i\mathrm{d}\xi_1 \wedge \cdots \wedge\widehat{\mathrm{d}\xi_i}\wedge\cdots\wedge \mathrm{d}\xi_3\nonumber\\
&= \sin\theta\mathrm{d}\theta\wedge \mathrm{d}\varphi.\\\nonumber
\end{align}

Thus,

\begin{align}\label{volume_element}
\mathrm{d}\Sigma= \mathrm{d}\omega_x(\xi)\wedge \mathrm{d}V^3 (x) = g^{1/2} \mathrm{d}\omega_x(\xi)\wedge \mathrm{d}x =n^{3}(x)\sin\theta\mathrm{d}\theta\wedge \mathrm{d}\varphi\wedge \mathrm{d}x.\\\nonumber
\end{align}

This will prove useful for later computations. The following two propositions are essential tools to prove the uniqueness of viscosity solutions.

\begin{proposition}\label{prop}
Let $u,v\in H^1(\Omega M)$. Then the following identity holds true:

\begin{align}\label{essential}
-\int_{\Omega_x M}\Gamma_{ij}^{k}\xi_i \xi_j \frac{\partial u}{\partial \xi_k} u \mathrm{d}\omega_x(\xi) = \int_{\Omega_x M}n^{-1}(x)\langle\nabla n,\xi\rangle  u^2 \mathrm{d}\omega_x (\xi).\\\nonumber
\end{align}

\end{proposition}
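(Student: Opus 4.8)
The plan is to recognize \eqref{essential} as an integration by parts on the fibre sphere $\Omega_x M$ (with $x$ fixed throughout), the crucial point being that the vertical part of the geodesic field acts on $u$ only through derivatives tangent to that sphere. First I would substitute the explicit Christoffel contraction: specializing \eqref{christ_refractive} to $\gamma=x$, $\dot\gamma=\xi$ and using $\|\xi\|=1$ on $\Omega_x M$ gives
\[
-\Gamma_{ij}^{k}(x)\xi_i\xi_j = n^{-1}(x)\Big(n^{-2}(x)\tfrac{\partial n}{\partial x_k}(x)-2\xi_k\langle\nabla n,\xi\rangle\Big)=:V^k .
\]
Writing $\frac{\partial u}{\partial\xi_k}\,u=\tfrac12\frac{\partial}{\partial\xi_k}(u^2)$, the left side of \eqref{essential} becomes $\tfrac12\int_{\Omega_x M}V^k\,\frac{\partial}{\partial\xi_k}(u^2)\,\mathrm{d}\omega_x(\xi)$, so it suffices to analyse the first-order operator $W:=V^k\,\partial/\partial\xi_k$ acting on functions on $\Omega_x M$. (The auxiliary $v$ appearing in the hypothesis does not enter the stated identity, which is quadratic in $u$.)

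The key observation is that, by \eqref{xi1}--\eqref{xi3}, each $\partial/\partial\xi_k$ is already written purely in terms of $\partial_\theta$ and $\partial_\varphi$: these are the \emph{tangential} spherical derivatives, so $W$ is a genuine tangent vector field $W=A\,\partial_\theta+B\,\partial_\varphi$ on the closed surface $\Omega_x M$. I would insert the parametrization $\xi=n^{-1}(\cos\varphi\sin\theta,\sin\varphi\sin\theta,\cos\theta)$ into $V^k$ and collect coefficients. The computation is routine but the cancellations are what make it work: the contributions carrying the factor $2\xi_k\langle\nabla n,\xi\rangle$ recombine so that the $\xi$-radial part of $V$ drops out, leaving $A=n^{-2}(\nabla_{\mathrm{eucl}}n\cdot\hat\theta)$ and $B=n^{-2}(\sin\theta)^{-1}(\nabla_{\mathrm{eucl}}n\cdot\hat\varphi)$, i.e. $A,B$ are $n^{-2}$ times the components of the Euclidean gradient of $n$ along the orthonormal spherical frame $\hat\theta,\hat\varphi$.

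Next I would integrate by parts against the measure $\mathrm{d}\omega_x=\sin\theta\,\mathrm{d}\theta\wedge\mathrm{d}\varphi$ from \eqref{surface_element}. Since $\Omega_x M$ has no boundary, the divergence theorem yields
\[
\int_{\Omega_x M}W(u^2)\,\mathrm{d}\omega_x=-\int_{\Omega_x M}u^2\,\operatorname{div}_{\omega}(W)\,\mathrm{d}\omega_x ,
\]
where the boundary terms at the poles $\theta=0,\pi$ vanish through the factor $\sin\theta$ and those in $\varphi$ by periodicity; one first proves this for $u\in C^\infty(\Omega M)$ and then passes to all $u\in H^1(\Omega M)$ by density. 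It remains to evaluate $\operatorname{div}_\omega(W)=(\sin\theta)^{-1}\big[\partial_\theta(\sin\theta\,A)+\partial_\varphi(\sin\theta\,B)\big]$; after differentiating and using $\cos^2\theta-\sin^2\theta-1=-2\sin^2\theta$, the angular terms collapse to $\operatorname{div}_\omega(W)=-2\,n^{-1}\langle\nabla n,\xi\rangle$. (As a check, $W$ equals $n^{-3}$ times the spherical gradient of the first-order spherical harmonic $\xi\mapsto\langle\nabla n,\xi\rangle$, whose spherical Laplacian is $-2\langle\nabla n,\xi\rangle$.) Substituting back gives $\tfrac12\int_{\Omega_x M}u^2\cdot 2\,n^{-1}\langle\nabla n,\xi\rangle\,\mathrm{d}\omega_x$, which is precisely the right-hand side of \eqref{essential}.

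The main obstacle, and the only place where the conformal form \eqref{g_n} of the metric is really exploited, is the coefficient computation in the second step: confirming that the non-tangential part of $V^k$ contributes nothing and that the surviving tangential coefficients assemble into the gradient of $\langle\nabla n,\xi\rangle$, so that the divergence produces exactly the eigenvalue factor $-2$. Everything downstream is the standard divergence theorem on $S^2$, the only technical point worth recording being the density argument that legitimizes the integration by parts for $H^1$ functions.
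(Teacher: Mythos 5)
Your proof is correct and takes essentially the same route as the paper's appendix proof: there too the argument substitutes \eqref{christ_refractive}, rewrites $\partial/\partial\xi_k$ via \eqref{xi1}--\eqref{xi3} as tangential derivatives $\partial_\theta,\partial_\varphi$, and integrates by parts against $\sin\theta\,\mathrm{d}\theta\wedge\mathrm{d}\varphi$ from \eqref{surface_element} with boundary terms killed by $\sin\theta$ and $\varphi$-periodicity, using exactly your cancellation $\cos^2\theta-\sin^2\theta-1=-2\sin^2\theta$; your only departures are cosmetic (packaging the two separate integrations by parts as a single divergence computation $\operatorname{div}_\omega W=-2\,n^{-1}\langle\nabla n,\xi\rangle$, and supplying the smooth-density step the paper leaves implicit). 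One harmless slip: in your parenthetical check the prefactor should be $n^{-1}$, not $n^{-3}$, i.e. $W=n^{-1}\nabla_{S^2}\langle\nabla n,\xi\rangle$, as your own (correct) coefficients $A,B$ and divergence formula confirm.
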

\begin{proof}
See Appendix \eqref{prop_proof}.
\end{proof}

\begin{proposition}
Let $u,v\in H^1(\Omega M)$. Then, we have

\begin{align}\label{Laplace_x}
-\int_{\Omega M} \Delta_x u v\mathrm{d}\Sigma &= \int_{\Omega M}\langle\nabla_x u,\nabla_x v\rangle \mathrm{d}\Sigma - \int_{\partial_+ \Omega M}v \nabla_{\nu} u\mathrm{d}\sigma_+\\
-\int_{\Omega M} \Delta_\xi u v\mathrm{d}\Sigma &= \int_{\Omega M}\langle\nabla_\xi u,\nabla_\xi v\rangle \mathrm{d}\Sigma.
\end{align}
\end{proposition}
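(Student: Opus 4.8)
The plan is to establish both identities as instances of Green's first identity, carried out separately in the spatial variables $x$ and in the directional variables $\xi$; this split is legitimate because $\Delta_x$ differentiates only in $x$ and $\Delta_\xi$ only in $\xi$. The decisive observation for the second identity is that for each fixed $x$ the fiber $\Omega_x M$ is a sphere, i.e.\ a compact manifold without boundary, so integration by parts in $\xi$ produces no boundary contribution. Concretely, I would rewrite $\Delta_\xi$ and $\nabla_\xi$ in the spherical coordinates $(\theta,\varphi)$ using \eqref{xi1}--\eqref{xi3}, recognise the resulting fiber operator as the Laplace--Beltrami operator of the sphere together with its measure $\mathrm{d}\omega_x(\xi)=\sin\theta\,\mathrm{d}\theta\wedge\mathrm{d}\varphi$ from \eqref{surface_element}, apply Green's identity on the fiber, and then integrate the remaining factor $n^3(x)\,\mathrm{d}x$ over $M$. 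Since no boundary term arises at any stage, this yields the second identity directly.

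For the first identity the key algebraic step is to absorb the factor $n^3(x)$ coming from $\mathrm{d}\Sigma$ in \eqref{volume_element} into $\Delta_x$ and to notice that its two terms recombine into a Euclidean divergence:
\[
n^3(x)\,\Delta_x u=n(x)\sum_i\frac{\partial^2 u}{\partial x_i^2}+\sum_i\frac{\partial n}{\partial x_i}\frac{\partial u}{\partial x_i}=\sum_i\frac{\partial}{\partial x_i}\Bigl(n(x)\frac{\partial u}{\partial x_i}\Bigr).
\]
For fixed angular variables the classical divergence theorem on the unit ball $M$ then gives $-\int_M\operatorname{div}_{eucl}(n\nabla^{eucl}_x u)\,v\,\mathrm{d}x=\int_M n\,\langle\nabla^{eucl}_x u,\nabla^{eucl}_x v\rangle_{eucl}\,\mathrm{d}x-\int_{\partial M}n\,v\,(\nabla^{eucl}_x u\cdot\nu_{eucl})\,\mathrm{d}S_{eucl}$. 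Using $g^{ij}=n^{-2}\delta_{ij}$ one checks $\langle\nabla_x u,\nabla_x v\rangle=n^{-2}\langle\nabla^{eucl}_x u,\nabla^{eucl}_x v\rangle_{eucl}$, so after restoring the measure the volume term is exactly $\int_{\Omega M}\langle\nabla_x u,\nabla_x v\rangle\,\mathrm{d}\Sigma$, as claimed.

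The main obstacle is the boundary term. Here I would convert the Euclidean normal flux into the Riemannian normal derivative: from $\nu=n^{-1}\nu_{eucl}$ for the Riemannian unit normal and $\nabla u=n^{-2}\nabla^{eucl}_x u$ one gets $\nabla_\nu u=\langle\nabla u,\nu\rangle=n^{-1}(\nabla^{eucl}_x u\cdot\nu_{eucl})$, whence $n\,(\nabla^{eucl}_x u\cdot\nu_{eucl})=n^2\,\nabla_\nu u$. Combining this with the conformal scaling $n^2\,\mathrm{d}S_{eucl}$ of the surface measure on the two-dimensional boundary and the fiber measure $\sin\theta\,\mathrm{d}\theta\wedge\mathrm{d}\varphi$ identifies the boundary integrand with $v\,\nabla_\nu u$ against $\mathrm{d}\sigma_+=n^2\,\mathrm{d}S_{eucl}\wedge\sin\theta\,\mathrm{d}\theta\wedge\mathrm{d}\varphi$. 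The delicate bookkeeping is that the spatial integration by parts a priori produces an integral over all of $\partial\Omega M=\partial_+\Omega M\cup\partial_-\Omega M$; reducing this to the stated contribution on $\partial_+\Omega M$ (precisely the portion that survives in the later weak formulation, where the test functions vanish on $\partial_-\Omega M$) and fixing the orientation of $\nu$ consistently is where care is required. The regularity is unproblematic: for $u,v\in H^1(\Omega M)$ all identities follow by density from the smooth case together with the continuity of the trace operator $\gamma_+$.
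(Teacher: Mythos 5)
Your proof is correct and takes essentially the same route as the paper, whose entire proof is the one-liner that the identities ``follow directly from Green's formula and the fact that $\partial(\Omega_x M)=\emptyset$''; your spherical-coordinate reduction on the fibers and the conformal-factor bookkeeping $n^3\Delta_x u=\sum_i\partial_i(n\,\partial_i u)$, $\langle\nabla_x u,\nabla_x v\rangle=n^{-2}\langle\nabla^{eucl}_x u,\nabla^{eucl}_x v\rangle_{eucl}$ are an accurate, fully detailed execution of exactly that argument. The discrepancy you flag --- that integration by parts in $x$ a priori yields a boundary term over all of $\partial\Omega M=\partial_+\Omega M\cup\partial_-\Omega M$, not merely $\partial_+\Omega M$ --- is a genuine imprecision in the paper's statement rather than a gap in your proof, and it is immaterial downstream, since in the weak formulation the test functions lie in $H_0^1(\Omega M)$ and annihilate the boundary term on the whole of $\partial\Omega M$.
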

\begin{proof}
The two statements follows directly from Green's formula and the fact that $\partial(\Omega_x M)=\emptyset$.
\end{proof}
%

\begin{corollary}
Let $u\in H_0^1(\Omega M)$. Then

\begin{align}\label{Laplace_x uu}
-\int_{\Omega M}\Delta_x u u\mathrm{d}\Sigma = \int_{\Omega M}\langle\nabla_x u, \nabla_x v\rangle \mathrm{d}\Sigma.
\end{align}
\end{corollary}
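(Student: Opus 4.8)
The plan is to obtain this as an immediate specialization of the first identity in the preceding Proposition, namely \eqref{Laplace_x}, to the diagonal case $v=u$, using only the defining property of $H_0^1(\Omega M)$. No new machinery is required; the corollary is a one-line consequence once the boundary term is shown to drop out.

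First I would invoke \eqref{Laplace_x}, which is valid for every pair $u,v\in H^1(\Omega M)$ and reads
\begin{align*}
-\int_{\Omega M} \Delta_x u\, v\, \mathrm{d}\Sigma = \int_{\Omega M}\langle\nabla_x u,\nabla_x v\rangle\, \mathrm{d}\Sigma - \int_{\partial_+ \Omega M} v\, \nabla_{\nu} u\, \mathrm{d}\sigma_+.
\end{align*}
Since $H_0^1(\Omega M)\subset H^1(\Omega M)$, this identity applies verbatim under the admissible choice $v=u$, which simply inserts $u$ into both slots.

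The decisive step is that the boundary term then vanishes: by definition of $H_0^1(\Omega M)$ the trace of $u$ on $\partial\Omega M$, and in particular its restriction $\gamma_+ u$ to $\partial_+\Omega M$, is zero, so $\int_{\partial_+ \Omega M} u\, \nabla_{\nu} u\, \mathrm{d}\sigma_+ = 0$ regardless of the (merely $L^2$) conormal derivative $\nabla_{\nu} u$. What survives is exactly the claimed identity, where the integrand on the right-hand side is $\langle\nabla_x u,\nabla_x u\rangle$ (the symbol $\nabla_x v$ appearing in \eqref{Laplace_x uu} being a transcription of $\nabla_x u$ after the substitution $v=u$).

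I do not anticipate any genuine obstacle here, as this is a pure corollary of the Proposition. The only points meriting a brief check are that $u$ is an admissible argument for \eqref{Laplace_x} — immediate from $H_0^1\subset H^1$ — and that a vanishing trace suffices to annihilate the boundary integral, which is precisely the characterizing feature of $H_0^1(\Omega M)$ and therefore needs no further argument.
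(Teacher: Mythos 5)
Your proposal is correct and matches the paper's proof, which likewise derives the corollary immediately from \eqref{Laplace_x} with $v=u$, the boundary term vanishing by the zero trace of $u\in H_0^1(\Omega M)$. You additionally make explicit the trace argument and correctly flag that the $\nabla_x v$ in \eqref{Laplace_x uu} is a typo for $\nabla_x u$, which the paper leaves implicit.
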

\begin{proof}
This is an immediate consequence of equation \eqref{Laplace_x}.
\end{proof}
%
%
%
%
%
%

Lastly, we need the next theorem for proving the uniqueness of solutions of general elliptic PDEs.

\begin{theorem}(Lax-Milgram Theorem \cite[Theorem 1.1]{Lions2})\newline
Let $V$ be a Hilbert space, $a(\cdot, \cdot):V\times V\rightarrow \mathbb{R}$ a coercive and continuous coercive bilinear form, i.e there exist $c_1, c_2>0$ such that

\begin{align*}
a(u,u)&\geq c_1 ||u||_V^2 \quad\forall u\in V\\
|a(u,v)| &\leq c_2 ||u||_V ||v||_V \quad\forall u, v\in V,\\
\end{align*}

and $b\in V'$ be a linear functional, i.e. there exist $c_3>0$ such that

\begin{align*}
|b(v)|\leq c_3 ||v||.
\end{align*}

Then the solution $u$ of the variational problem 
$$a(u,v)=b(v) \quad\forall v\in V$$
exists and is unique.
\end{theorem}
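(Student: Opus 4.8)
The plan is to recast the variational problem as a single operator equation in $V$ and then solve it by showing the relevant operator is a bijection. First I would apply the Riesz representation theorem twice. For fixed $u\in V$ the map $v\mapsto a(u,v)$ is a bounded linear functional on $V$, by the continuity bound $|a(u,v)|\leq c_2\|u\|_V\|v\|_V$; hence there is a unique $Au\in V$ with $a(u,v)=\langle Au,v\rangle$ for all $v$. Linearity of $a$ in its first slot makes $A:V\to V$ linear, and testing against $v=Au$ together with continuity yields $\|Au\|_V\leq c_2\|u\|_V$, so $A$ is bounded. Likewise $b$ is represented as $b(v)=\langle f,v\rangle$ for a unique $f\in V$. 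The variational problem $a(u,v)=b(v)$ for all $v$ is then equivalent to the single equation $Au=f$, and it remains to prove $A$ is bijective.

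For injectivity and a lower bound I would use coercivity: choosing $v=u$ and applying Cauchy--Schwarz gives $c_1\|u\|_V^2\leq a(u,u)=\langle Au,u\rangle\leq\|Au\|_V\|u\|_V$, hence
\begin{align*}
\|Au\|_V\geq c_1\|u\|_V\qquad\text{for all }u\in V.
\end{align*}
This estimate immediately shows $A$ is injective, and that its range is closed: if $Au_n\to w$ then $\|u_n-u_m\|_V\leq c_1^{-1}\|Au_n-Au_m\|_V$ shows $(u_n)$ is Cauchy, so $u_n\to u$ and $w=Au$ by continuity of $A$.

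The main obstacle is surjectivity, i.e. $\operatorname{Range}(A)=V$. Since the range is a closed subspace of the Hilbert space $V$, it suffices to show its orthogonal complement is trivial. If $w\perp\operatorname{Range}(A)$, then in particular $\langle Aw,w\rangle=0$; but coercivity forces $c_1\|w\|_V^2\leq a(w,w)=\langle Aw,w\rangle=0$, so $w=0$. Thus $\operatorname{Range}(A)^{\perp}=\{0\}$ and, the range being closed, $\operatorname{Range}(A)=V$. Consequently $A$ is a continuous linear bijection whose inverse is bounded by the lower estimate, and $u:=A^{-1}f$ is the unique solution of $Au=f$, equivalently of the variational equation. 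Uniqueness is in any case already contained in the coercivity estimate, since two solutions $u_1,u_2$ satisfy $a(u_1-u_2,v)=0$ for all $v$, and choosing $v=u_1-u_2$ gives $c_1\|u_1-u_2\|_V^2\leq 0$, hence $u_1=u_2$.

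As an alternative that avoids the orthogonal-complement step, I would instead solve $Au=f$ by a contraction argument: for a relaxation parameter $\rho>0$ set $T_\rho v:=v-\rho(Av-f)$ and estimate, writing $w=v_1-v_2$,
\begin{align*}
\|T_\rho v_1-T_\rho v_2\|_V^2=\|w-\rho Aw\|_V^2\leq\bigl(1-2\rho c_1+\rho^2 c_2^2\bigr)\|w\|_V^2,
\end{align*}
using coercivity on the cross term and continuity on the quadratic term. For $\rho\in(0,2c_1/c_2^2)$ the factor is strictly less than one, so $T_\rho$ is a contraction on the complete space $V$, and the Banach fixed-point theorem supplies a unique fixed point, which is precisely the solution of $Au=f$. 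Either route is routine once the Riesz reduction is in place; the only conceptually substantive point is surjectivity, handled above either through the closed-range/orthogonality argument or through the contraction.
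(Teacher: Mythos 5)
Your proof is correct, and there is nothing in the paper to compare it against: the paper states the Lax--Milgram theorem as a quoted result with a citation to \cite[Theorem 1.1]{Lions2} and gives no proof of its own, using it only as a black box in the proof of Theorem 3.2. Your argument is the classical one. The Riesz reduction to the operator equation $Au=f$ is carried out cleanly, the bound $\|Au\|_V\leq c_2\|u\|_V$ and the lower estimate $\|Au\|_V\geq c_1\|u\|_V$ are both derived correctly, and the surjectivity argument (closed range via the Cauchy estimate, trivial orthogonal complement via coercivity applied to $\langle Aw,w\rangle=0$) is complete; note that it correctly avoids assuming symmetry of $a$, which is the whole point of Lax--Milgram over plain Riesz representation. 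The contraction alternative is also sound: expanding $\|w-\rho Aw\|_V^2=\|w\|_V^2-2\rho\langle Aw,w\rangle+\rho^2\|Aw\|_V^2$ and using coercivity on the middle term and $\|Aw\|_V\leq c_2\|w\|_V$ on the last gives the factor $1-2\rho c_1+\rho^2c_2^2<1$ exactly for $\rho\in(0,2c_1/c_2^2)$. The two routes buy slightly different things: the closed-range argument is shorter and purely linear, while the fixed-point version is the one that generalizes beyond equations (it is the engine of the Lions--Stampacchia theorem for variational inequalities) and additionally hands you the stability bound $\|u\|_V\leq c_1^{-1}\|b\|_{V'}$, the same kind of estimate the paper later quotes in its Theorem 3.6 for the time-dependent problem. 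Either version would serve as a self-contained substitute for the paper's citation.
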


By definition, a viscosity solution to \eqref{trans} solves the equation

\begin{align}\label{viscoo}
-\varepsilon \Delta u + \langle \nabla_x u, \xi\rangle  +\alpha u - \Gamma_{ij}^{k}\xi_i \xi_j \frac{\partial u}{\partial \xi_k} = f_{i_1,\dots, i_m}(x)\xi^{i_1}\cdots \xi^{i_m},\\\nonumber
\end{align}

for $\varepsilon > 0$. Multiplying both sides with a test function $v\in H^1(\Omega M)$ and integrating over $\Omega M$ leads to

\begin{align*}
\int_{\Omega M} -\varepsilon\Delta u v + \langle \nabla_x u,\xi\rangle  v +\alpha uv - \Gamma_{ij}^{k}\xi_i \xi_j \frac{\partial u}{\partial \xi_k}v \mathrm{d}\Sigma = \int_{\Omega M} f_{i_1,\dots, i_m}(x)\xi^{i_1}\cdots \xi^{i_m} v\mathrm{d}\Sigma.\\
\end{align*}

We derive the variational formulation of the boundary value problem by setting

\begin{align}
a_{\varepsilon}(u,v)&= \int_{\Omega M} -\varepsilon\Delta u v  \mathrm{d}\Sigma + a(u,v)\nonumber\\
&=\int_{\Omega M} \varepsilon \langle\nabla u, \nabla v\rangle   \mathrm{d}\Sigma- \int_{\partial_+ \Omega M}v \nabla_{\nu} u\mathrm{d}\sigma_+  + a(u,v)\label{auv}\\
b_\phi^{\varepsilon}(v)&= \int_{\Omega M} f_{i_1,\dots, i_m}(x)\xi^{i_1}\cdots \xi^{i_m} v\mathrm{d}\Sigma - a_\varepsilon (\hat{\phi},v).\label{bv}\\\nonumber
\end{align}

Consequently the weak form of \eqref{viscoo} along with the boundary condition \eqref{bc} is given by

\fbox{\parbox{\columnwidth}{
Find $u_{\phi,\varepsilon}=u_\varepsilon + \hat{\phi}\in H^1(\Omega M)$ such that

\begin{align}\label{variation}
a_{\varepsilon}(u_\varepsilon,v)=b_\phi^{\varepsilon}(v),\qquad \forall v\in H_{0}^{1}(\Omega M),\\\nonumber
\end{align}

where $u_\varepsilon\in H_0^1(\Omega M)$ and $\gamma_+ \hat{\phi}=\phi$.

}} \\ \\

The variational problem \eqref{variation} has in fact a unique solution.

\begin{theorem}\label{bigtheorem}
Let $\varepsilon >0$, $\alpha\in L^\infty (\Omega M)$ with $\alpha(x,\xi)\geq \alpha_0 >0$ for all $(x,\xi)\in \Omega M$, $n\in C^1(M)$ and $f\in L^2(S^m \tau_M ')$ a $m$-tensor field.
If 

\begin{align}\label{vierus_cond}
 \sup_{x\in M} \frac{||\nabla n(x)|| }{n(x)} < \alpha_0,\\\nonumber
\end{align}

then the solution $u_\varepsilon\in H^1(\Omega M)$ of the variational problem \eqref{variation} exists and is unique.
\end{theorem}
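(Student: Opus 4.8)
The plan is to verify the hypotheses of the Lax--Milgram Theorem on the Hilbert space $V=H_0^1(\Omega M)$ for the bilinear form $a_\varepsilon$ from \eqref{auv} and the functional $b_\phi^\varepsilon$ from \eqref{bv}; unique solvability of \eqref{variation} then follows at once. Continuity of $a_\varepsilon$ is routine: compactness of $M$ together with $n\in C^1(M)$ and $n\ge c_n>0$ bounds the Christoffel symbols \eqref{use} and the components of $\xi$ on $\Omega M$, while $\alpha\in L^\infty(\Omega M)$; estimating every summand of $a_\varepsilon(u,v)$ by Cauchy--Schwarz yields $|a_\varepsilon(u,v)|\le c_2\|u\|_{H^1}\|v\|_{H^1}$. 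The bound on $b_\phi^\varepsilon$ follows from $f\in L^2(S^m\tau_M')$ (so that $f_{i_1\cdots i_m}\xi^{i_1}\cdots\xi^{i_m}\in L^2(\Omega M)$), the embedding $H^1\hookrightarrow L^2$, and the continuity of $a_\varepsilon$ evaluated at the fixed extension $\hat{\phi}$.

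The decisive and most delicate step is coercivity, which is subtle precisely because the first-order operator $\mathcal{H}$ is not coercive by itself. Testing with $v=u\in H_0^1(\Omega M)$ and using that the trace of $u$ vanishes, the boundary integral over $\partial_+\Omega M$ in \eqref{auv} drops, and by the Corollary \eqref{Laplace_x uu} together with \eqref{Laplace_x} the viscosity term reduces to $\varepsilon\|\nabla u\|_{L^2(\Omega M)}^2$, so that
\begin{align*}
a_\varepsilon(u,u)=\varepsilon\|\nabla u\|_{L^2(\Omega M)}^2+\int_{\Omega M}\langle\nabla_x u,\xi\rangle u\,\mathrm{d}\Sigma-\int_{\Omega M}\Gamma_{ij}^{k}\xi^i\xi^j\frac{\partial u}{\partial\xi^k}u\,\mathrm{d}\Sigma+\int_{\Omega M}\alpha u^2\,\mathrm{d}\Sigma .
\end{align*}
The two first-order integrals must be converted into zeroth-order quantities. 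For the vertical one I would apply Proposition \ref{prop} fibrewise and integrate in $x$, obtaining $\int_{\Omega M}n^{-1}(x)\langle\nabla n,\xi\rangle u^2\,\mathrm{d}\Sigma$. For the horizontal advection I would write $\langle\nabla_x u,\xi\rangle u=\tfrac12\,\xi^i\partial_{x_i}(u^2)$ and integrate by parts in $x$, using the explicit volume element \eqref{volume_element} and the vanishing trace of $u$; because $\xi$ carries the spatial dependence $\|\xi\|_{eucl}=n^{-1}(x)$, the surviving contribution is again a constant multiple of $\int_{\Omega M}n^{-1}(x)\langle\nabla n,\xi\rangle u^2\,\mathrm{d}\Sigma$, so that the two first-order terms combine into a single zeroth-order quantity of this type.

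It then remains to control this remainder. Since $\|\xi\|=1$, Cauchy--Schwarz gives $|n^{-1}(x)\langle\nabla n,\xi\rangle|\le\|\nabla n(x)\|/n(x)$, so the combined term is bounded in modulus by $\sup_{x\in M}\tfrac{\|\nabla n(x)\|}{n(x)}\,\|u\|_{L^2(\Omega M)}^2$. Combining with $\int_{\Omega M}\alpha u^2\,\mathrm{d}\Sigma\ge\alpha_0\|u\|_{L^2(\Omega M)}^2$, assumption \eqref{vierus_cond} enters exactly here and yields
\begin{align*}
a_\varepsilon(u,u)\ge\varepsilon\|\nabla u\|_{L^2(\Omega M)}^2+\Big(\alpha_0-\sup_{x\in M}\frac{\|\nabla n(x)\|}{n(x)}\Big)\|u\|_{L^2(\Omega M)}^2\ge c_1\|u\|_{H^1(\Omega M)}^2,
\end{align*}
with $c_1=\min\{\varepsilon,\ \alpha_0-\sup_{x\in M}\|\nabla n\|/n\}>0$. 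Coercivity, continuity and the functional bound then give existence and uniqueness of $u_\varepsilon$ by Lax--Milgram.

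I expect the main obstacle to be the rigorous treatment of the two first-order terms in the coercivity estimate, and in particular the precise bookkeeping of the constant multiplying the leftover zeroth-order integral. Proposition \ref{prop} settles the vertical derivative, but the horizontal advection demands a careful integration by parts on the sphere bundle $\Omega M$, keeping track of the coupling between $x$ and $\xi$ through the metric \eqref{g_n}; it is exactly the surviving term of the form $\int_{\Omega M}n^{-1}\langle\nabla n,\xi\rangle u^2\,\mathrm{d}\Sigma$ that the smallness assumption \eqref{vierus_cond} on the relative gradient $\|\nabla n\|/n$ is tailored to dominate, so that the absorption $\alpha_0$ is not overwhelmed and $a_\varepsilon$ becomes coercive for every fixed $\varepsilon>0$.
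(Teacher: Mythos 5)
Your proposal is correct and rests on the same skeleton as the paper's proof --- Lax--Milgram on $H_0^1(\Omega M)$, routine continuity via Cauchy--Schwarz and boundedness of the Christoffel symbols, Proposition \ref{prop} for the vertical first-order term, and condition \eqref{vierus_cond} to dominate the zeroth-order remainder --- but you treat the horizontal advection genuinely differently. The paper asserts (testing with $v=u$) that $\int_{\Omega M}\langle\nabla_x u,\xi\rangle u\,\mathrm{d}\Sigma$ reduces to the nonnegative boundary integral $\tfrac12\int_{\partial_+\Omega M}u^2\langle\xi,\nu\rangle\,\mathrm{d}\sigma$ with \emph{no} interior remainder, so that \eqref{vierus_cond} is spent entirely on the single term $\int_{\Omega M} n^{-1}\langle\nabla n,\xi\rangle u^2\,\mathrm{d}\Sigma$ produced by Proposition \ref{prop}; it also splits $\alpha=\delta\alpha+(1-\delta)\alpha$ between two pieces $a_\varepsilon^{(1)},a_\varepsilon^{(2)}$ and lands on the coercivity constant $\min(\varepsilon,\delta)$, whereas you run one combined estimate and get $\min\bigl(\varepsilon,\ \alpha_0-\sup_{x}\|\nabla n\|/n\bigr)$ --- cosmetically different, same substance. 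You instead predict an interior remainder from the advection because $\|\xi\|_{eucl}=n^{-1}(x)$ couples $x$ and $\xi$, and you are right to: carrying out your integration by parts in the chart $(x,\theta,\varphi)$ with $\mathrm{d}\Sigma=n^3(x)\sin\theta\,\mathrm{d}\theta\,\mathrm{d}\varphi\,\mathrm{d}x$ from \eqref{volume_element} yields the boundary term \emph{minus} $\int_{\Omega M} n^{-1}\langle\nabla n,\xi\rangle u^2\,\mathrm{d}\Sigma$, i.e.\ the constant you leave open is $-1$, so the horizontal remainder exactly cancels the vertical one --- consistent with the Liouville/Gauss--Ostrogradskii identity for the full geodesic field $\mathcal{H}$. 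Hence your modulus bound $\sup_{x}\bigl(\|\nabla n\|/n\bigr)\|u\|_{L^2}^2$ on the combined remainder holds a fortiori and your coercivity estimate closes under \eqref{vierus_cond}; the one loose end you candidly flagged (the bookkeeping of that constant) resolves in your favor. Your accounting in fact exposes a mild tension in the paper's own computation --- the claims ``horizontal term $=$ pure boundary integral'' and ``vertical term $=+\int n^{-1}\langle\nabla n,\xi\rangle u^2\,\mathrm{d}\Sigma$'' cannot both hold if the sum of the two must reduce to the boundary term --- but under \eqref{vierus_cond} either bookkeeping delivers coercivity, so the theorem is established by both routes; yours is the more careful one at precisely the delicate step, at the cost of the fiberwise integration by parts the paper avoids stating.
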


\begin{proof}
The proof consists of an application of the Lax-Milgram Theorem. To this end we have to show

\begin{itemize}
\item the coercivity of $a_\varepsilon$,
\item the continuity of $a_{\varepsilon}$ and
\item the continuity of $b^\varepsilon_\phi$.
\end{itemize}
Let $0<\delta < 1$ be sufficiently small such that

\begin{align}
 \sup_{x\in M} \frac{||\nabla n(x)|| }{n(x)} < (1-\delta)\alpha_0\\\nonumber
\end{align}

is satisfied. Since $v=0$ on $\partial \Omega M$ the boundary integral in \eqref{auv} vanishes. We split $a_{\varepsilon}=a_{\varepsilon}^{(1)} + a_{\varepsilon}^{(2)}$, where

\begin{align*}
a_{\varepsilon}^{(1)}(u,v) &= \int_{\Omega M} \varepsilon\langle\nabla_x u, \nabla_x v\rangle  + \langle\nabla_x u,\xi\rangle  v + \delta\alpha u v\mathrm{d}\Sigma\\ \\
a_{\varepsilon}^{(2)}(u,v) &= \int_{\Omega M} \varepsilon \langle\nabla_\xi u, \nabla_\xi v\rangle  -\Gamma_{jk}^{i}(x)\xi^j\xi^k\frac{\partial u}{\partial \xi^i} v + (1-\delta)\alpha u v\mathrm{d}\Sigma.\\
\end{align*}

One verifies that

\begin{align*}
\int_{\Omega M}\langle \nabla_x u,\xi\rangle  u\mathrm{d}\Sigma&= \int_{\partial_+ \Omega M}u^2 \langle\xi, \nu\rangle \mathrm{d}\sigma - \int_{\Omega M}\langle\nabla_x u,\xi\rangle  u\mathrm{d}\Sigma,\\
\end{align*}

where $\mathrm{d}\sigma$ is the measure on $\partial\Omega M$. Hence,

\begin{align*}
\int_{\Omega M}\langle \nabla_x u,\xi\rangle  u\mathrm{d}\Sigma
&=\frac{1}{2} \int_{\partial_+ \Omega M}\phi^2 \langle\xi, \nu\rangle \mathrm{d}\sigma\geq 0\\
\end{align*}

and, consequently,

\begin{align*}
a_{\varepsilon}^{(1)}(u,u)\geq \int_{\Omega M} \varepsilon||\nabla_x u|| ^2 + \delta u^2 \mathrm{d}\Sigma.\\
\end{align*}

Using equation \eqref{essential} and condition \eqref{vierus_cond} we estimate the second part by

\begin{align*}
a_{\varepsilon}^{(2)}(u,u) &= \int_{\Omega M} \varepsilon||\nabla_\xi u|| ^2 + \left((1-\delta)\alpha + n^{-1}(x)\langle \nabla n(x),\xi\rangle  \right) u^2\mathrm{d}\Sigma\\
&\geq \int_{\Omega M} \varepsilon||\nabla_\xi u|| ^2+ \left((1-\delta)\alpha - n^{-1}||\nabla n(x)|| \right) u^2 \mathrm{d}\Sigma\\
&\geq \int_{\Omega M} \varepsilon||\nabla_\xi u|| ^2 \mathrm{d}\Sigma.\\
\end{align*}

Adding both parts we have the coercivity condition

\begin{align}
a_{\varepsilon}(u,u) &\geq \int_{\Omega M} \varepsilon(||\nabla_x u|| ^2 + ||\nabla_\xi u|| ^2) + \delta u^2\mathrm{d}\Sigma\nonumber\\
&\geq \min\left(\varepsilon, \delta\right) ||u||_{H^1(\Omega M)}^2.\\\nonumber
\end{align}

Next we prove the continuity of $a$. Using the triangle inequality and \eqref{auv} gives

\begin{align}
|a_{\varepsilon}(u,v)|&\leq  \Big\vert\int_{\Omega M}\varepsilon\left(\langle\nabla_x u, \nabla_x v\rangle  + \langle\nabla_\xi u, \nabla_\xi v\rangle \right)\mathrm{d}\Sigma\Big\vert + \Big\vert\int_{\Omega M}\xi_k \frac{\partial u}{\partial x_k} v\mathrm{d}\Sigma\Big\vert \nonumber\\
&+\Big\vert\int_{\Omega M}\alpha uv \mathrm{d}\Sigma\Big\vert +\Big\vert\int_{\Omega M} \Gamma_{ij}^{k}\xi_i \xi_j \frac{\partial u}{\partial \xi_k}v \mathrm{d}\Sigma\Big\vert. \label{a_cont} \\
\end{align}

The first summand can be estimated by using the Cauchy-Schwarz inequality

\begin{align}
\Big\vert\int_{\Omega M} \varepsilon \left(\langle\nabla_x u, \nabla_x v\rangle  + \langle\nabla_\xi u, \nabla_\xi v\rangle \right)\mathrm{d}\Sigma\Big\vert
&\leq \varepsilon ||u||_{H^1(\Omega M)} ||v||_{H^1(\Omega M)}.\label{laplace_cont}\\\nonumber
\end{align}

In the same manner we obtain for the second summand

\begin{align}
\Big\vert\int_{\Omega M}\langle\nabla_x,\xi\rangle  v\mathrm{d}\Sigma\Big\vert
&=\Big\vert\int_{\Omega M}\langle \nabla_x u, v \xi\rangle  \mathrm{d}\Sigma\Big\vert\nonumber\\
&\leq \left( \int_{\Omega M}\langle\nabla_x u, \nabla_x u\rangle  \mathrm{d}\Sigma \right)^ {\frac{1}{2}}\cdot \left(\int_{\Omega M}v^2 \mathrm{d}\Sigma \right)^{\frac{1}{2}}\nonumber\\
&\leq ||u||_{H^1(\Omega M)}||v||_{H^1(\Omega M)}.\label{kp_cont}\\\nonumber
\end{align}

The absorption term can be estimated by

\begin{align}
\Big\vert\int_{\Omega M} \alpha u v\mathrm{d}\Sigma \leq ||\alpha||_{L^\infty (\Omega M)} ||u||_{L^2(\Omega M)} ||v||_{L^2(\Omega M)}\leq ||\alpha||_{L^\infty (\Omega M)} ||u||_{H^1(\Omega M)} ||v||_{H^1(\Omega M)}.\label{att_cont}\\\nonumber
\end{align}

For the last part in \eqref{a_cont} we use \eqref{christ_refractive} and obtain

\begin{align}
\Big\vert\int_{\Omega M} -\Gamma_{ij}^{k}(x)\xi^i\xi^j \frac{\partial u}{\partial \xi_k} v\mathrm{d}\Sigma\Big\vert
&= \Big\vert\int_{\Omega M} n^{-3}(x)\left(\frac{\partial n}{\partial x_k} - 2\xi_k\langle\nabla n,\xi\rangle  \right)\frac{\partial u}{\partial \xi_k}v \mathrm{d}\Sigma\Big\vert\nonumber\\
&\leq \int_{\Omega M} ||\nabla n(x)|| (n^{-1}(x) + 2n^{-3}(x))||\nabla_\xi u||  |v| \mathrm{d}\Sigma\nonumber\\
&\leq \int_{\Omega M}3 \frac{||\nabla n(x)|| }{n(x)} ||\nabla_\xi u||  |v| \mathrm{d}\Sigma\nonumber\\
&\leq 3||\alpha||_{L^{\infty}(\Omega M)}||\nabla_\xi u||_{L^2(\Omega M)}||v||_{L^2(\Omega M)}\nonumber\\
&\leq 3||\alpha||_{L^{\infty}(\Omega M)}|| u||_{H^1(\Omega M)} ||v||_{H^1(\Omega M)}.\label{last_cont}\\\nonumber
\end{align}

Finally, with \eqref{laplace_cont} - \eqref{last_cont} we arrive at

\begin{align*}
|a_{\varepsilon}(u,v)|\leq (\varepsilon + 1 +  4||\alpha ||_{L^\infty(\Omega M)}) ||u||_{H^1(\Omega M)} ||v||_{H^1(\Omega M)}.\\
\end{align*}

The last step is to prove continuity of $b^\varepsilon_\phi$. We compute

\begin{align*}
\Big\vert \int_{\Omega M} f_{i_1,\dots i_m}(x)\xi^{i_1}\cdots \xi^{i_m} v \mathrm{d}\Sigma\Big\vert
&\leq c(n)\left(   \int_{\Omega M}\langle f(x),\xi^m\rangle 
^2\mathrm{d}\Sigma\right)^{\frac{1}{2}}\cdot \left(   \int_{\Omega M}v^2\mathrm{d}\Sigma\right)^{\frac{1}{2}}\\
&\leq c(n)||f||_{L^2(S^m \tau_M ')}||v||_{H^1(\Omega M)}\\
\end{align*}

for a positive constant $c(n)$ depending on $n$. The continuity of $b^\varepsilon_\phi$ then follows from this estimate and the continuity of $a_\varepsilon$. This completes the proof.

\end{proof}

\begin{remark}
The continuity conditions for $a_\varepsilon$ and $b_\phi^\varepsilon$ hold true also for $\varepsilon=0$, whereas the coercivity only holds for $\varepsilon>0$. Theorem \ref{bigtheorem} guarantees that there exists a unique, weak viscous solution if $n$ varies only slowly. Especially in the Euclidean geometry ($n=1$) condition \eqref{vierus_cond} is valid for any positive $\alpha_0$. This is in accordance with the results in \cite{derevtsov}.
\end{remark}


\subsection{Extension to time-depending tensor fields $f$}


Let $V$ be a reflexive and separable Banach space with norm $||\cdot||_V$ and $V^*$ its dual space with norm $||\cdot||_{V^*}$. The dual pairing is denoted by $\langle \cdot, \cdot\rangle_{V\times V^*}$ . We define the Lebesgue-Bochner space $L^2(0,T;V)$ as the space of all $V$-valued functions $u$ on $(0,T)$ for which $t\mapsto ||u(t)||_{V}$ is a function in $L^2([0,T])$. Equipped with the norm

\begin{align*}
||u||_{L^2(0,T;V)}:= \left(\int_{0}^{T}||u(t)||_{V}^2 \mathrm{d}t\right)^{\frac{1}{2}}
\end{align*}

$L^2(0,T;V)$ turns into a Banach space. Moreover, let 

\begin{align}
W^{1,1,2}(V, V^*) = \lbrace u\in L^2(0,T; V): \mathrm{d}_t u\in L^2(0,T;V^*)\rbrace,\\ \nonumber
\end{align}

where $\mathrm{d}_t u$ is the distributional derivative of $u$. In the following we always consider the case that $V = H_0^1(\Omega M)$ leading to $V^*  = H^{-1}(\Omega M)$. 
We interpret \eqref{variation} as an abstract operator equation (c.f. \cite{showalter1997},\cite{minity1963}) in the sense that

\begin{align*}
A_{\varepsilon}(u) = f_{i_1,\dots,i_m}\xi^{i_1}\cdots\xi^{i_m}\quad \textrm{ in }V^*,\\
\end{align*}

where $A_{\varepsilon}:V\rightarrow V^*$ defined by $A_{\varepsilon}u = a_{\varepsilon}(u,\cdot)$ is a monotone operator. This result can applies also to the dynamic equation following \cite{zeid} 
and \cite{roubicek}. As seen in \eqref{trans} $u=\mathcal I_\alpha^d f$ satisfies

\begin{align}
\frac{\partial u}{\partial t} + (\mathcal{H} + \alpha) u = f_{i_1,\dots,i_m}(t,x)\xi^{i_1}\cdots\xi^{i_m}.\\\nonumber
\end{align}

The corresponding viscosity solution is characterized by

\begin{align}
\frac{\partial u}{\partial t} - \varepsilon\Delta u +(\mathcal{H} + \alpha) u = f_{i_1,\dots,i_m}(t,x)\xi^{i_1}\cdots\xi^{i_m}.\\\nonumber
\end{align}

%

The associated variational formulation reads as:

\fbox{\parbox{\columnwidth}{
Find $u_\varepsilon\in W^{1,1,2}(0,T; V,V^{*})$ such that

\begin{align}\label{dyn_pde}
\langle \mathrm{d}_t u_\varepsilon(t),v\rangle_{V^*, V} + a_{\varepsilon}(t; u_\varepsilon(t), v)&=\langle b_\phi^\varepsilon(t),v\rangle_{V^*, V},\\
u_\varepsilon(0)&=0,\nonumber\\\nonumber
\end{align}

 for all $v\in V$ and for a.e. $t\in (0,T)$ and set $u_{\phi,\varepsilon}^d=u_\varepsilon + \hat{\phi}$.

}} \\ \\

The bilinear form $a_{\varepsilon}$ is defined similar to \eqref{auv} by

\begin{align*}
a_{\varepsilon}(t;u,v)=\int_{\Omega M} \varepsilon\langle\nabla u(t),\nabla v(t)\rangle  + \langle \nabla_x u(t),\xi\rangle v(t) - \Gamma_{ij}^{k}\xi_i \xi_j \frac{\partial u(t)}{\partial \xi_k}v(t) +\alpha u(t)v(t) \mathrm{d}\Sigma - \int_{\partial_+ \Omega M}\varepsilon\nabla_\nu u(t)v(t)\mathrm{d}\sigma_+\\
\end{align*}

and the linear form $b_\phi^\varepsilon$ is given by

\begin{align*}
\langle b_\phi^\varepsilon(t),v\rangle = \int_{\Omega M} f_{i_1,\dots,i_m}(t,x)\xi^{i_1}\cdots\xi^{i_m} v(t)\mathrm{d}\Sigma - a_\varepsilon(t,\hat{\phi}(t),v(t)).\\
\end{align*}

Note that, since $W^{1,1,2}(V, V^*) \subset \mathcal{C}(0,T;L^2 (\Omega M))$ by the Aubin-Lions Lemma, the point evaluation $u_\varepsilon (0)$ in \eqref{dyn_pde} is well-defined. The next theorem is a typical tool that is used to guarantee unique solutions of time-dependent differential equations. $ $\newline

%
%
%
%

\begin{theorem}[Theorem 3.6 in \cite{alphonse_2015}]\label{uniq_dynamic}
Let $V$ be a reflexive Banach space. Assume $b\in V^{*}$ and that the bilinear form $a(t;\cdot,\cdot):V\times V\rightarrow \mathbb{R}$ satisfies the following properties:
\begin{itemize}
\item The mapping $t\mapsto a(t;u,v)$ is measurable for all $u,v\in V$.
\item There exist a $c_1>0$: $a(t,u,v)\geq c_1 ||u||_V^2$  for all $t\in(0,T)$.
\item There exist a $c_2>0$: $|a(t,u,v)|\leq c_2 ||u||_V ||v||_V$ for all $t\in(0,T)$.
\end{itemize}

Then the equation

\begin{align*}
\langle\mathrm{d}_t u(t),v\rangle_{V^* , V}+a(t,u(t),v)=\langle b, v\rangle_{V^*, V}\qquad\forall v\in V \\
\end{align*}

has a unique solution $u\in W^{1,1,2}(0,T;V, V^*)$ satisfying

\begin{align}
||u||_{W^{1,1,2}(V, V^*)}\leq \frac{1}{c_1}||b||_{V^*}.\\ \nonumber
\end{align}
\end{theorem}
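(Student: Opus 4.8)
The plan is to prove this by the Galerkin method, the standard route for Lions-type parabolic existence theorems. Since $V$ is separable (the paper's $V=H_0^1(\Omega M)$ certainly is), I would first fix a countable basis $\{w_k\}_{k\in\mathbb{N}}$ of $V$ whose finite spans $V_n=\mathrm{span}\{w_1,\dots,w_n\}$ are dense in $V$, together with the Gelfand triple $V\hookrightarrow H\hookrightarrow V^*$ with pivot space $H=L^2(\Omega M)$. For each $n$ I seek $u_n(t)=\sum_{k=1}^n c_k^n(t)\,w_k$ solving the projected equations $\langle \mathrm{d}_t u_n(t),w_j\rangle + a(t;u_n(t),w_j)=\langle b,w_j\rangle$ for $j=1,\dots,n$ with $u_n(0)=0$. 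These form a linear first-order ODE system for the coefficient vector $(c_1^n,\dots,c_n^n)$; because $t\mapsto a(t;w_k,w_j)$ is measurable and, by the continuity bound, bounded, the Carath\'eodory existence theorem yields a unique absolutely continuous solution on $[0,T]$.

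The second step is a uniform a priori estimate. Testing the projected equation with $u_n(t)$ itself and using $\langle \mathrm{d}_t u_n,u_n\rangle=\tfrac{1}{2}\tfrac{\mathrm{d}}{\mathrm{d}t}\|u_n\|_H^2$ together with coercivity gives $\tfrac{1}{2}\tfrac{\mathrm{d}}{\mathrm{d}t}\|u_n\|_H^2+c_1\|u_n\|_V^2\le \langle b,u_n\rangle\le \|b\|_{V^*}\|u_n\|_V$. Integrating in $t$, using $u_n(0)=0$ and the Cauchy--Schwarz inequality in time, yields $\|u_n\|_{L^2(0,T;V)}\le c_1^{-1}\|b\|$ uniformly in $n$; this is exactly where the quantitative estimate $\|u\|_{W^{1,1,2}}\le c_1^{-1}\|b\|_{V^*}$ originates, after the limit passage and weak lower semicontinuity of the norm. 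From the equation itself I then bound $\|\mathrm{d}_t u_n\|_{L^2(0,T;V^*)}$ using the continuity of $a$, the established $V$-bound, and a basis chosen so that the Galerkin projections are uniformly bounded.

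The third step is the limit passage. The uniform bounds furnish a subsequence with $u_n\rightharpoonup u$ in $L^2(0,T;V)$ and $\mathrm{d}_t u_n\rightharpoonup \mathrm{d}_t u$ in $L^2(0,T;V^*)$, so $u\in W^{1,1,2}(0,T;V,V^*)$. Fixing $j$ and a scalar $\varphi\in C_c^\infty(0,T)$, multiplying the projected equation by $\varphi$ and integrating, the linearity of $a$ and $b$ lets me pass to the limit term by term; since $\bigcup_n V_n$ is dense in $V$, this identifies $u$ as a solution of the variational equation for all $v\in V$ and a.e.\ $t$. The initial condition $u(0)=0$ is recovered through the continuous embedding $W^{1,1,2}(0,T;V,V^*)\hookrightarrow C([0,T];H)$ already invoked in the paper via Aubin--Lions, so point evaluation at $t=0$ is meaningful and stable under the weak limit.

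Finally, uniqueness follows from the same energy computation applied to the difference $w=u^{(1)}-u^{(2)}$ of two solutions: $w$ solves the homogeneous equation with $w(0)=0$, and testing with $w$ gives $\tfrac{1}{2}\tfrac{\mathrm{d}}{\mathrm{d}t}\|w\|_H^2+c_1\|w\|_V^2\le 0$, whence $\|w(t)\|_H^2$ is nonincreasing from zero, forcing $w\equiv 0$. The step I expect to be most delicate is the limit passage in the time-dependent form $a(t;\cdot,\cdot)$ combined with the rigorous justification of the identity $\langle \mathrm{d}_t u,u\rangle=\tfrac{1}{2}\tfrac{\mathrm{d}}{\mathrm{d}t}\|u\|_H^2$ in the Bochner--Sobolev setting, since that identity underlies both the a priori bound and uniqueness and rests on density of smooth functions and the Gelfand-triple structure. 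An alternative to the entire Galerkin construction would be to invoke Lions' abstract projection lemma on a space-time formulation, which delivers existence in one stroke but obscures the constructive estimate that produces the stated bound.
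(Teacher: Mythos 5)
Your proposal does not conflict with the paper, because the paper contains no proof of this statement at all: it is quoted (as Theorem 3.6) from Alphonse, Elliott and Stinner \cite{alphonse_2015}, where it is proved in a more general evolving-space framework by essentially the Lions--Galerkin machinery you describe. Your four steps --- finite-dimensional solvability via Carath\'eodory, the energy estimate from coercivity, weak-limit passage using density of $\bigcup_n V_n$, and uniqueness by testing the homogeneous difference equation with itself --- form the correct and standard skeleton, so what you have supplied is the textbook proof the paper delegates to the literature.

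Three qualifications are worth recording. First, you tacitly strengthen the hypotheses: Galerkin needs separability of $V$, and the identity $\langle \mathrm{d}_t u,u\rangle = \tfrac{1}{2}\tfrac{\mathrm{d}}{\mathrm{d}t}\|u\|_H^2$ needs a Gelfand triple $V\hookrightarrow H\hookrightarrow V^*$; neither appears in the statement as quoted, but both hold in the paper's application ($V=H_0^1(\Omega M)$, $H=L^2(\Omega M)$), and without a pivot space the statement is not even fully meaningful --- a defect of the transcription, not of your argument. The same goes for the initial condition $u(0)=0$, which you silently (and correctly) impose; it is absent from the statement but indispensable for uniqueness and present in the paper's \eqref{dyn_pde}. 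Second, your energy estimate controls only the $L^2(0,T;V)$ component with constant $c_1^{-1}$; the $W^{1,1,2}$ norm also contains $\|\mathrm{d}_t u\|_{L^2(0,T;V^*)}$, which you bound from the equation using continuity of $a$, so your route yields a constant depending on $c_1$, $c_2$ and $T$ and cannot reproduce the bare $1/c_1$ advertised --- again an imprecision of the quoted statement. Third, the uniform bound on $\|\mathrm{d}_t u_n\|_{L^2(0,T;V^*)}$ against all of $V$ requires uniformly bounded Galerkin projections; you flag this correctly, and in the Hilbert case it is settled by a basis orthogonal in $V$ and orthonormal in $H$. With these qualifications your proof is sound.
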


Using theorem \ref{uniq_dynamic} we immediately one of the main results of this paper.

\begin{theorem}\label{T_main_theorem_dyn}
Let $\varepsilon >0$, $\alpha\in L^\infty (\Omega M)$ with $\alpha(x,\xi)\geq \alpha_0 >0$ for all $(x,\xi)\in \Omega M$, $n\in C^{1}(M)$ and $f\in L^2(0,T; S^m \tau_M ')$ a $m$-tensor field. If 

\begin{align}\label{vierus_cond_2}
\sup_{x\in M} \frac{||\nabla n(x)|| }{n(x)} < \alpha_0,\\\nonumber
\end{align}

then the variational problem \eqref{dyn_pde} has a unique solution $u_\varepsilon$.
\end{theorem}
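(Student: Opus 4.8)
The plan is to verify the three hypotheses of Theorem~\ref{uniq_dynamic} for the time-dependent bilinear form $a_\varepsilon(t;\cdot,\cdot)$ on $V=H_0^1(\Omega M)$, and then to invoke that theorem directly. Since the spatial differential operators $\mathcal{H}$, $\Delta$, and $\alpha$ appearing in $a_\varepsilon(t;u,v)$ carry no explicit time dependence (they act only in the $(x,\xi)$ variables), the time parameter $t$ enters only through the arguments $u(t),v(t)$. Consequently the measurability of $t\mapsto a_\varepsilon(t;u,v)$ for fixed $u,v\in V$ is essentially immediate, and the continuity and coercivity estimates reduce to the stationary estimates already established in Theorem~\ref{bigtheorem} applied pointwise in $t$.

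Concretely, first I would observe that the coefficients $n$, $\nabla n$, $\alpha$, $\Gamma_{ij}^k$ are all independent of $t$, so for fixed $u,v\in V$ the map $t\mapsto a_\varepsilon(t;u,v)$ is a fixed continuous bilinear expression evaluated at the measurable $V$-valued functions $u(t),v(t)$; this yields the required measurability. Second, for the coercivity I would repeat verbatim the splitting $a_\varepsilon=a_\varepsilon^{(1)}+a_\varepsilon^{(2)}$ from the proof of Theorem~\ref{bigtheorem}, using Proposition~\ref{prop} (equation~\eqref{essential}) together with condition~\eqref{vierus_cond_2} to absorb the Christoffel term into the absorption term. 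Since the constant $\min(\varepsilon,\delta)$ produced there is uniform in $t$, one obtains a single constant $c_1=\min(\varepsilon,\delta)>0$ with $a_\varepsilon(t;u,u)\geq c_1\|u\|_V^2$ for all $t\in(0,T)$. Third, for the continuity I would reuse the chain of estimates~\eqref{laplace_cont}--\eqref{last_cont}, which again furnishes a $t$-independent constant $c_2=\varepsilon+1+4\|\alpha\|_{L^\infty(\Omega M)}$ so that $|a_\varepsilon(t;u,v)|\leq c_2\|u\|_V\|v\|_V$ uniformly in $t$.

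It remains to handle the right-hand side. Here I would note that the data $b_\phi^\varepsilon(t)$ now involves the time-dependent source $f(t,\cdot)\in L^2(S^m\tau_M')$ and the extension $\hat\phi(t)$; one must check that $t\mapsto b_\phi^\varepsilon(t)$ defines an element of $L^2(0,T;V^*)$. This follows from the continuity estimate for $b_\phi^\varepsilon$ in the proof of Theorem~\ref{bigtheorem} combined with the assumption $f\in L^2(0,T;S^m\tau_M')$ and the mapping properties of the ray transform stated earlier, so that $\|b_\phi^\varepsilon(t)\|_{V^*}$ is square-integrable on $(0,T)$. With all hypotheses of Theorem~\ref{uniq_dynamic} verified, existence and uniqueness of $u_\varepsilon\in W^{1,1,2}(0,T;V,V^*)$ solving~\eqref{dyn_pde}, together with the initial condition $u_\varepsilon(0)=0$ (which is meaningful by the Aubin--Lions embedding noted in the text), follows immediately.

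The main obstacle I anticipate is not in any single estimate but in the precise formulation needed to apply Theorem~\ref{uniq_dynamic}: that theorem as quoted assumes a \emph{fixed} $b\in V^*$, whereas the dynamic problem genuinely has a time-varying right-hand side $b_\phi^\varepsilon(t)\in L^2(0,T;V^*)$. The careful point is therefore to either cite the version of the result admitting $L^2(0,T;V^*)$ data (the abstract parabolic theory of~\cite{alphonse_2015,showalter1997,roubicek}) or to argue that the uniform-in-$t$ coercivity and continuity constants established above are exactly what the time-dependent Lions--Galerkin framework requires. Ensuring that the initial condition and the Bochner regularity of $b_\phi^\varepsilon$ are compatible with the cited theorem is the step that demands the most attention; the pointwise-in-$t$ ellipticity estimates themselves are routine inheritances from the static case.
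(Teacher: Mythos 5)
Your proposal is correct and follows essentially the same route as the paper, whose proof consists of exactly your first step: invoke Theorem~\ref{uniq_dynamic} with the coercivity and continuity constants inherited, uniformly in $t$ (the coefficients $n$, $\alpha$, $\Gamma_{ij}^k$ being time-independent), from the static Theorem~\ref{bigtheorem}, with measurability of $t\mapsto a_\varepsilon(t;u,v)$ deduced from continuity. If anything your write-up is more careful than the paper's one-line argument: the subtlety you flag --- that Theorem~\ref{uniq_dynamic} as quoted takes a fixed $b\in V^*$ while the dynamic problem has a time-varying $b_\phi^\varepsilon(t)$, so one must verify $b_\phi^\varepsilon\in L^2(0,T;V^*)$ and appeal to the version of the abstract result admitting such data --- is passed over in silence by the paper, and your resolution of it is the right one.
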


\begin{proof}
The assumption follows directly from \eqref{bigtheorem}, \eqref{uniq_dynamic} and the fact that the bilinear form $a_{\varepsilon}$ is continuous and hence measurable. 
\end{proof}

Summarizing theorems \ref{bigtheorem} and \ref{T_main_theorem_dyn} static and dynamic tensor field tomography in a medium with absorption and refraction can be mathematically modeled by the linear equations

\[    \mathcal{F}_\alpha (f) = \phi,\qquad\qquad \mathcal{F}_\alpha^d (f) = \phi    \]

for given data $\phi$, where $\mathcal{F}_\alpha : L^2 (S^m \tau_M ')\to L^2 (\partial_+ \Omega M)$, 
$\mathcal{F}_\alpha^d: W^{1,1,2}(0,T; V,V^{*})\to L^2(0,T;\partial_+ \Omega M)$ can be decomposed as $\mathcal{F}_\alpha = \gamma_+ \circ \mathcal{S}_\alpha$, $\mathcal{F}_\alpha^d = \gamma_+ \circ \mathcal{S}_\alpha^d$ with parameter-to-solution mappings

\begin{align*}
&\mathcal{S}_\alpha : L^2 (S^m \tau_M ') \to L^2 (\partial \Omega M),\qquad f \mapsto u_{\phi,\varepsilon},\\
&\mathcal{S}_\alpha^d : W^{1,1,2}(0,T; V,V^{*}) \to L^2(0,T;\partial \Omega M),\qquad f \mapsto u_{\phi,\varepsilon}^d.
\end{align*}

Theorems \ref{bigtheorem} and \ref{T_main_theorem_dyn} then guarantee that all mappings are well-defined.

%
%
%

%


\section{Numerical validation for $\varepsilon\to 0$}

It is still an open question whether $\lim_{\varepsilon\to 0} u_{\phi,\varepsilon}^{(d)}$ exists (and in which topology) and solves the original
transport equations \eqref{trans}, \eqref{trans_dyn}, respectively. This is very important also regarding the corresponding inverse source problems. At least we are able to prove numerical evidence by the following example:\\[2ex]

Let $M$ be the 2D unit ball and $f:M\rightarrow\mathbb{R}^2$ a vector field on $M$ defined by

\begin{align*}
f(x_1, x_2)= \begin{pmatrix}
\frac{1}{x_1^2 +x_2^2 +1}\\ x_1 +x_2
\end{pmatrix}.
\end{align*}

We choose $n(x)=x_1^2 + x_2^2 + 1.5$ and $\alpha = 1$ such that \eqref{vierus_cond} is satisfied. Now, consider the discretization $$u_{ijk}=u(x_{ij},\xi_{ijk})$$ of \eqref{trans} and \eqref{viscoo} where $x_{ij}= r_i (\cos\phi_j, \sin\phi_j)$, $\xi_{ijk} = n^{-1}(x_{ij})(\cos\theta_k,\sin\theta_k)$ and 

\begin{align}
r_i=\frac{i}{I},\quad i=1,\dots,I\qquad
\phi_j = \frac{2\pi j}{J},\quad j=1,\dots,J\qquad
\theta_k = \frac{2\pi k}{K},\quad k=1,\dots,K.
\end{align}

\begin{figure}[!h]
\begin{center} 
   \includegraphics[scale=0.37]{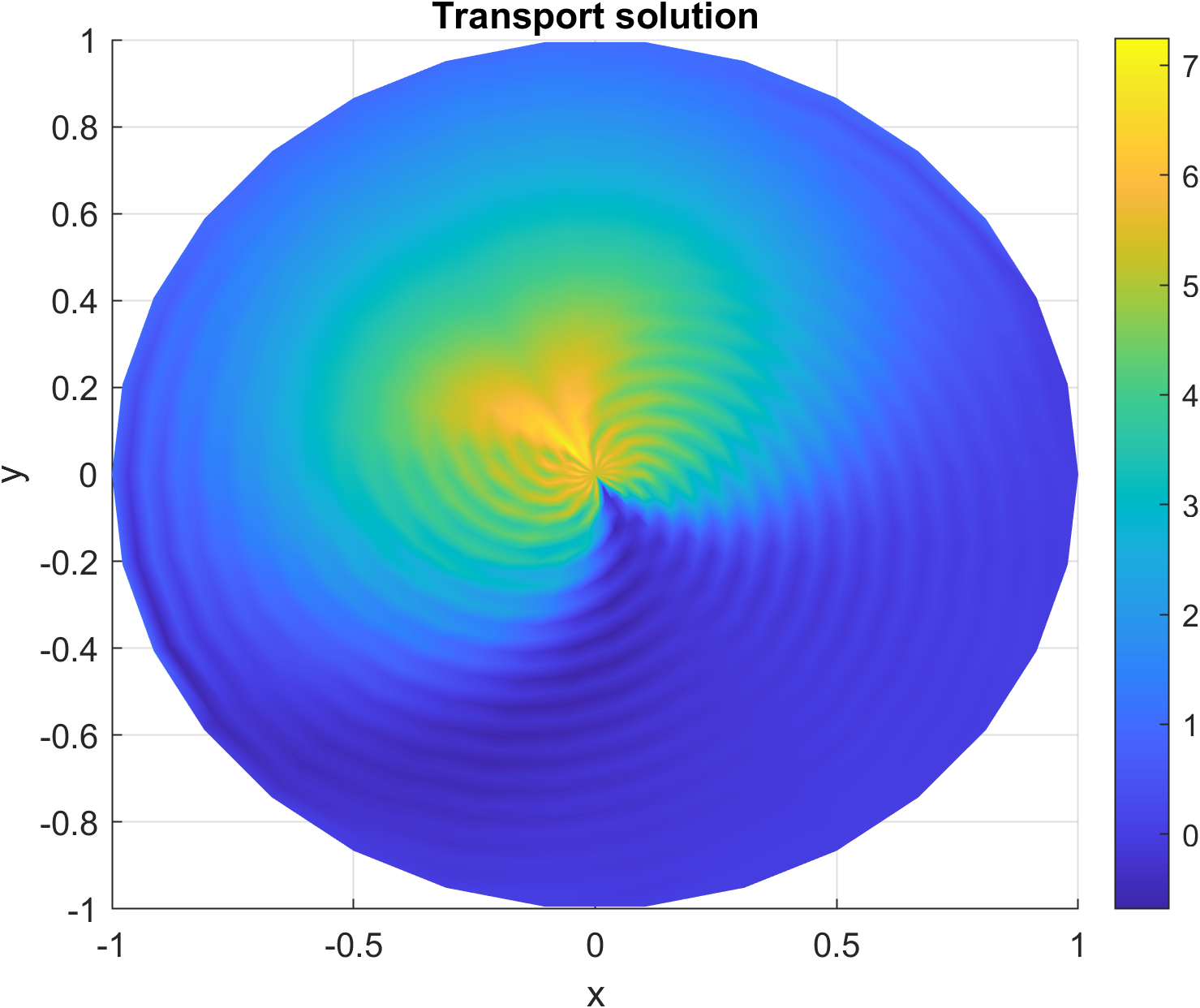}\qquad 
   \includegraphics[scale=0.37]{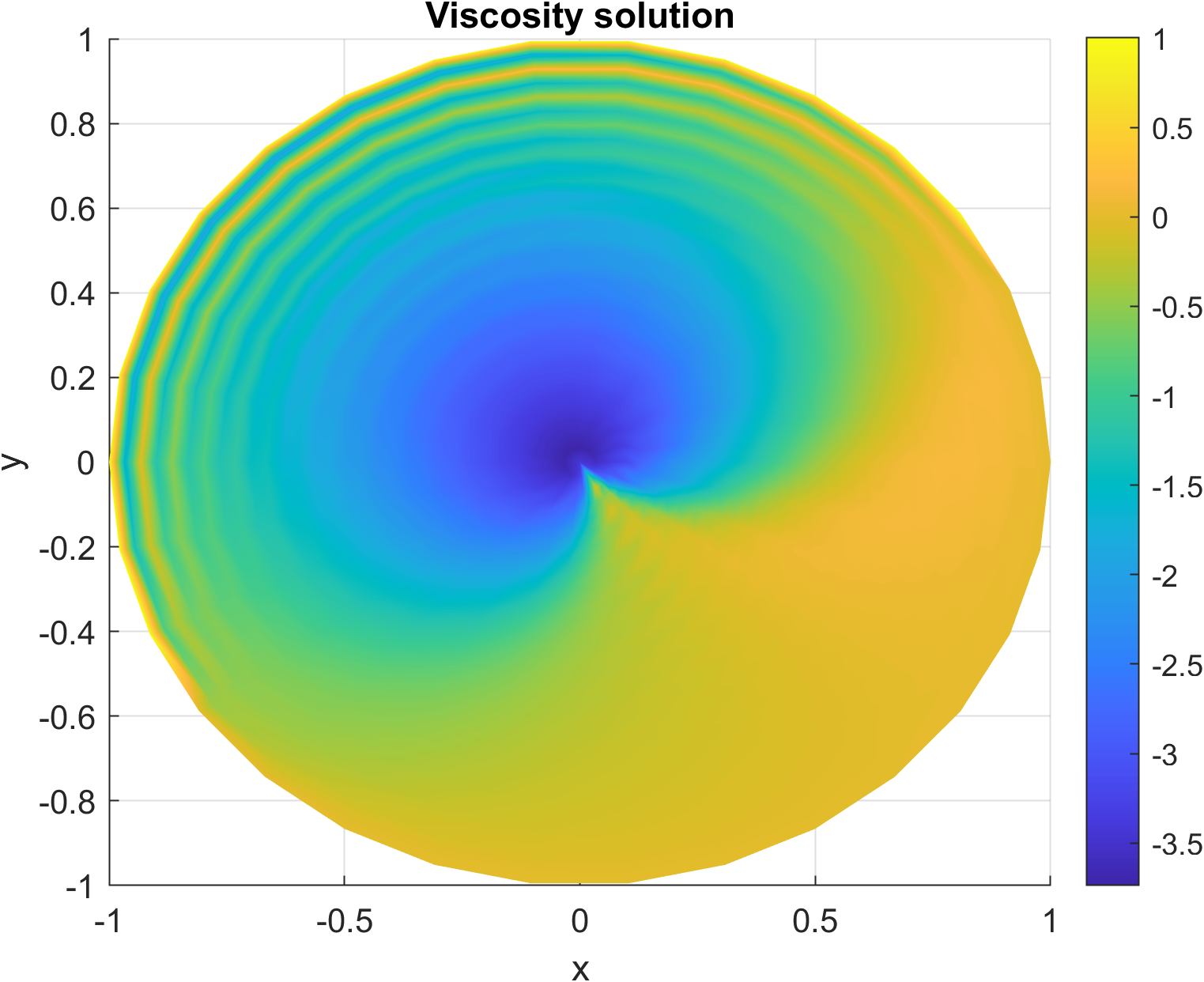}\qquad
   \includegraphics[scale=0.37]{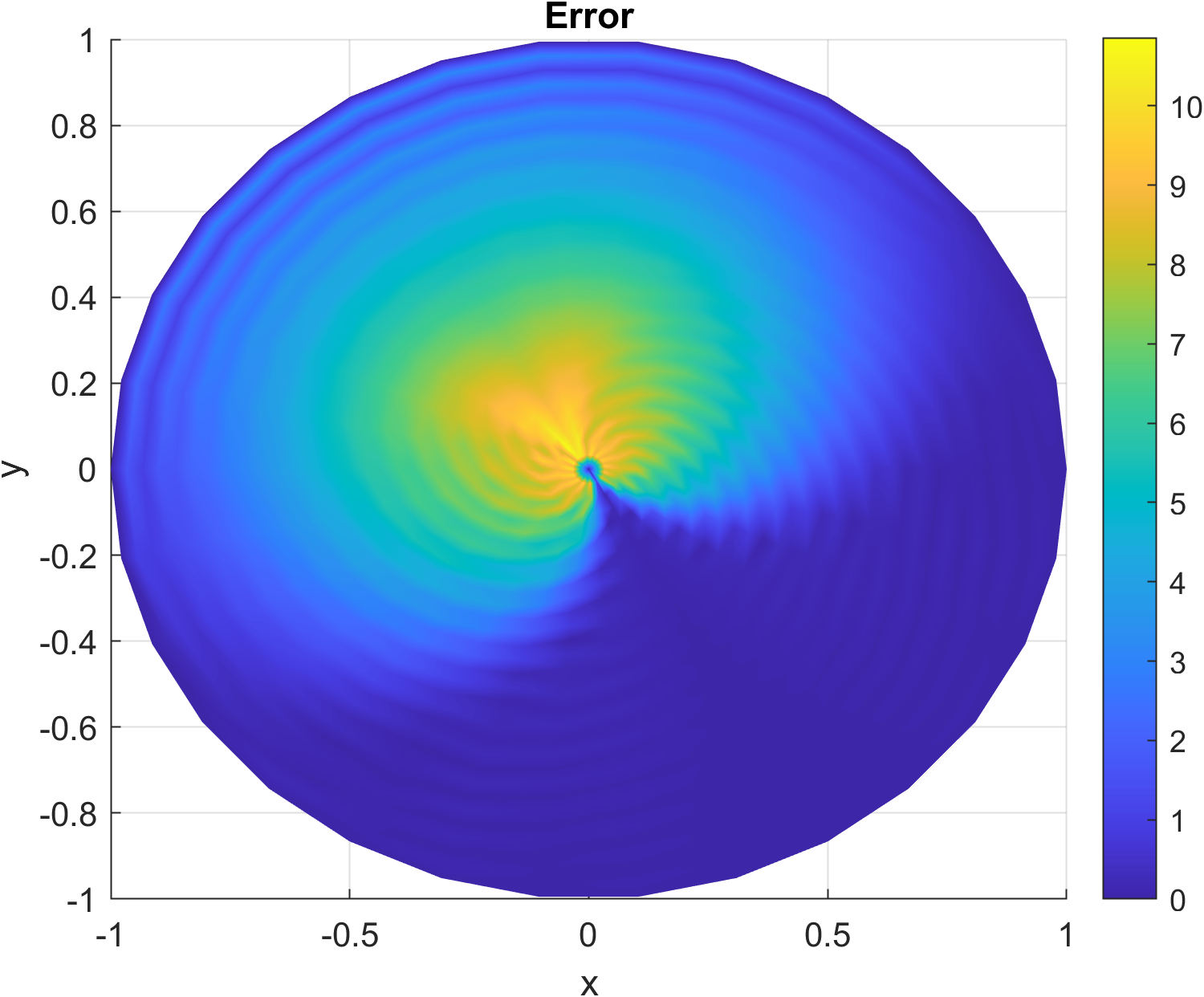}
  \caption{Solution of transport equation, viscosity equation and the relative error for $(I,J,K)=(30,30,10)$ and $\varepsilon=10^{-3}$}
  \end{center}
\end{figure}

\begin{figure}[!h]
\begin{center} 
   \includegraphics[scale=0.37]{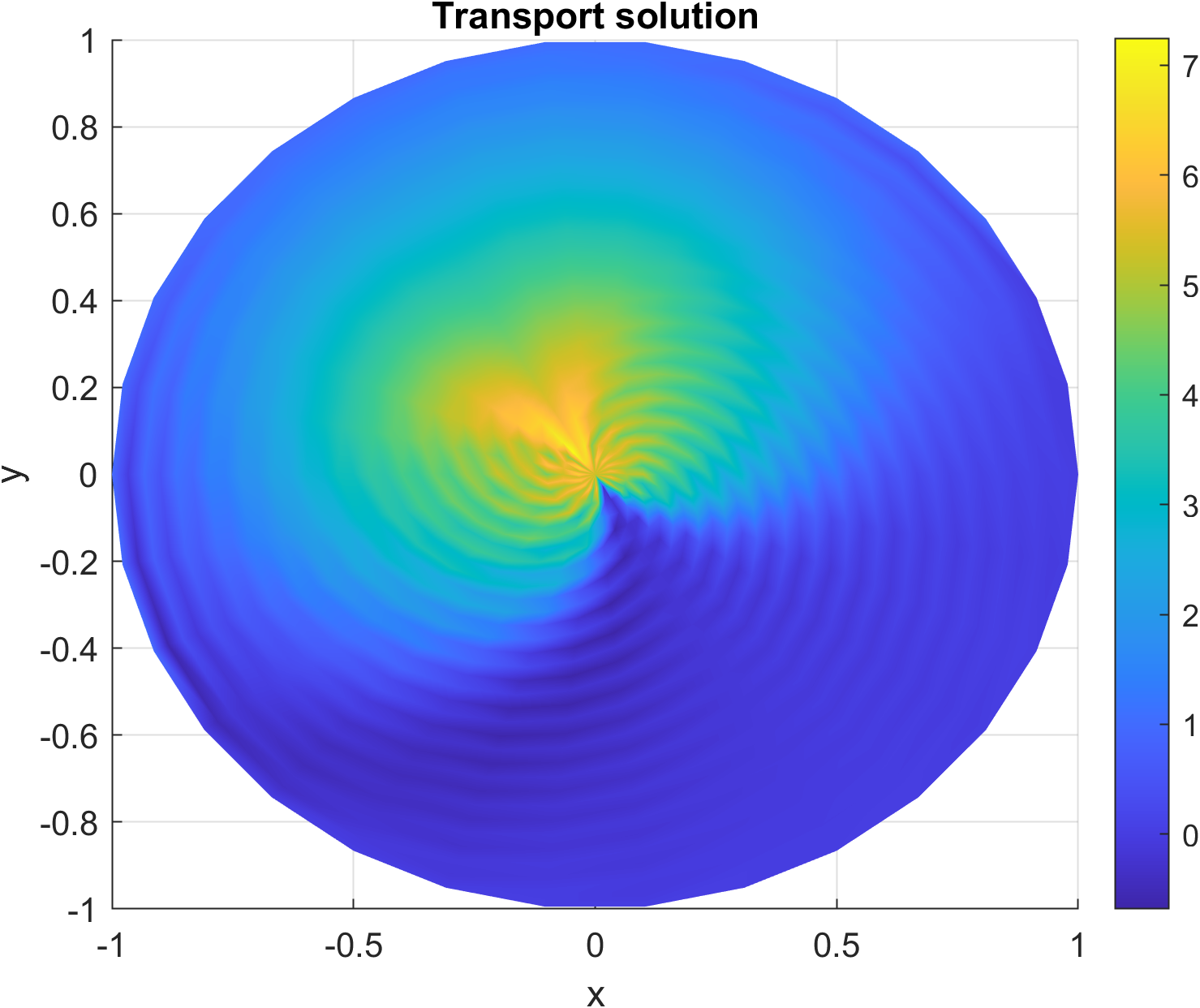}\qquad 
   \includegraphics[scale=0.37]{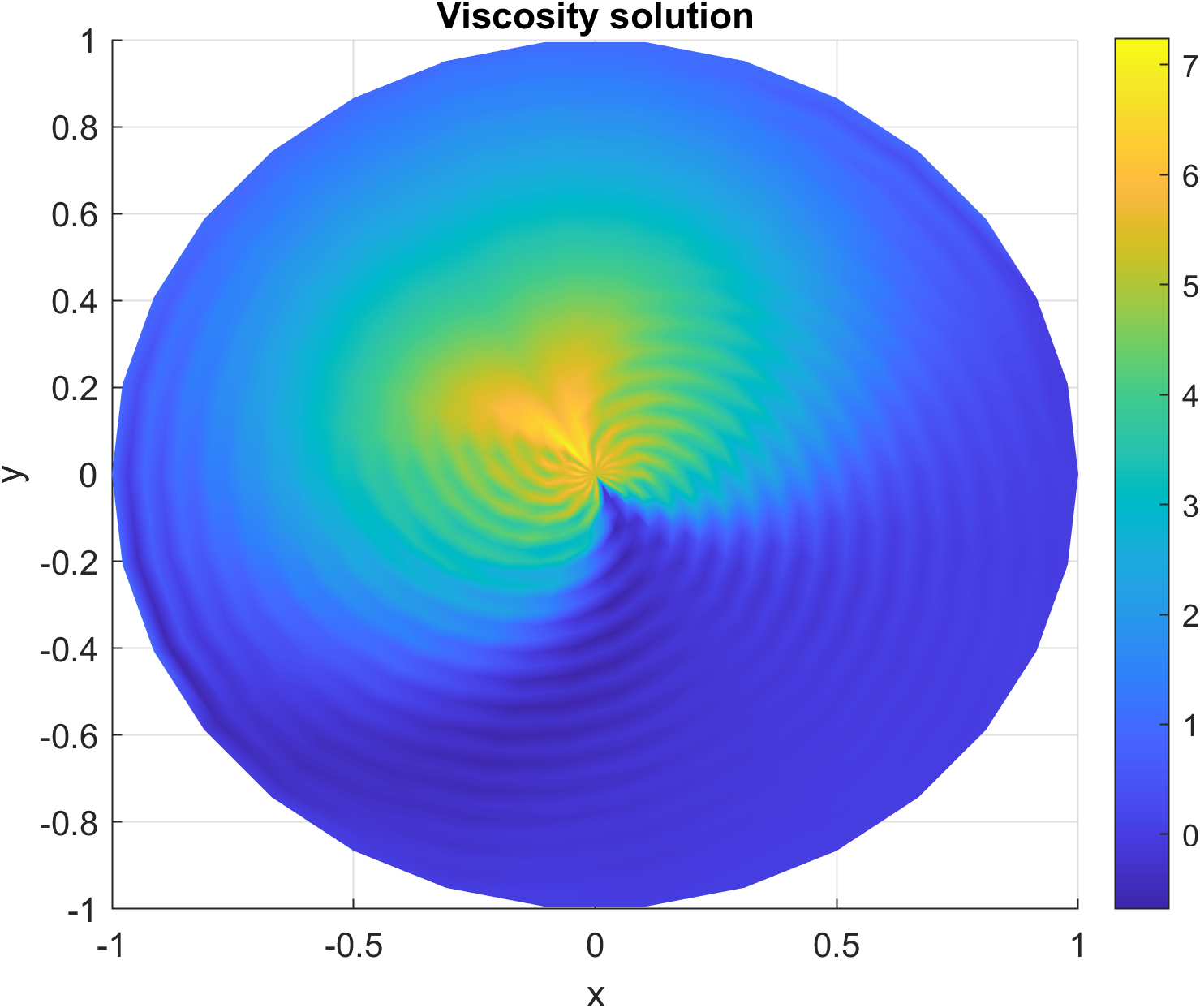}\qquad
   \includegraphics[scale=0.37]{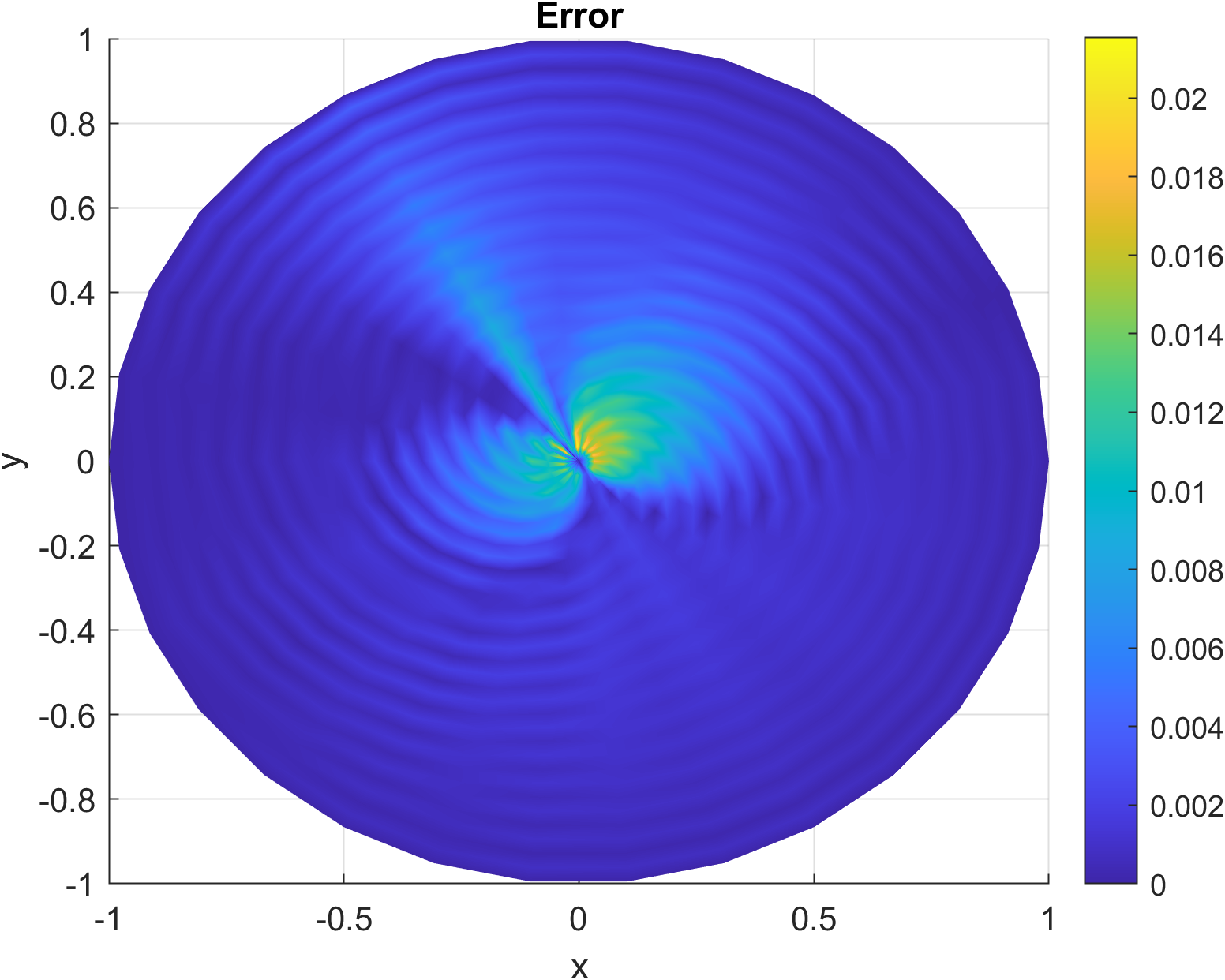}
  \caption{Solution of transport equation, viscosity equation and the relative error for $(I,J,K)=(30,30,10)$ and $\varepsilon=10^{-6}$}
  \end{center}
\end{figure}  
  
  \begin{figure}[!h]
\begin{center} 
   \includegraphics[scale=0.37]{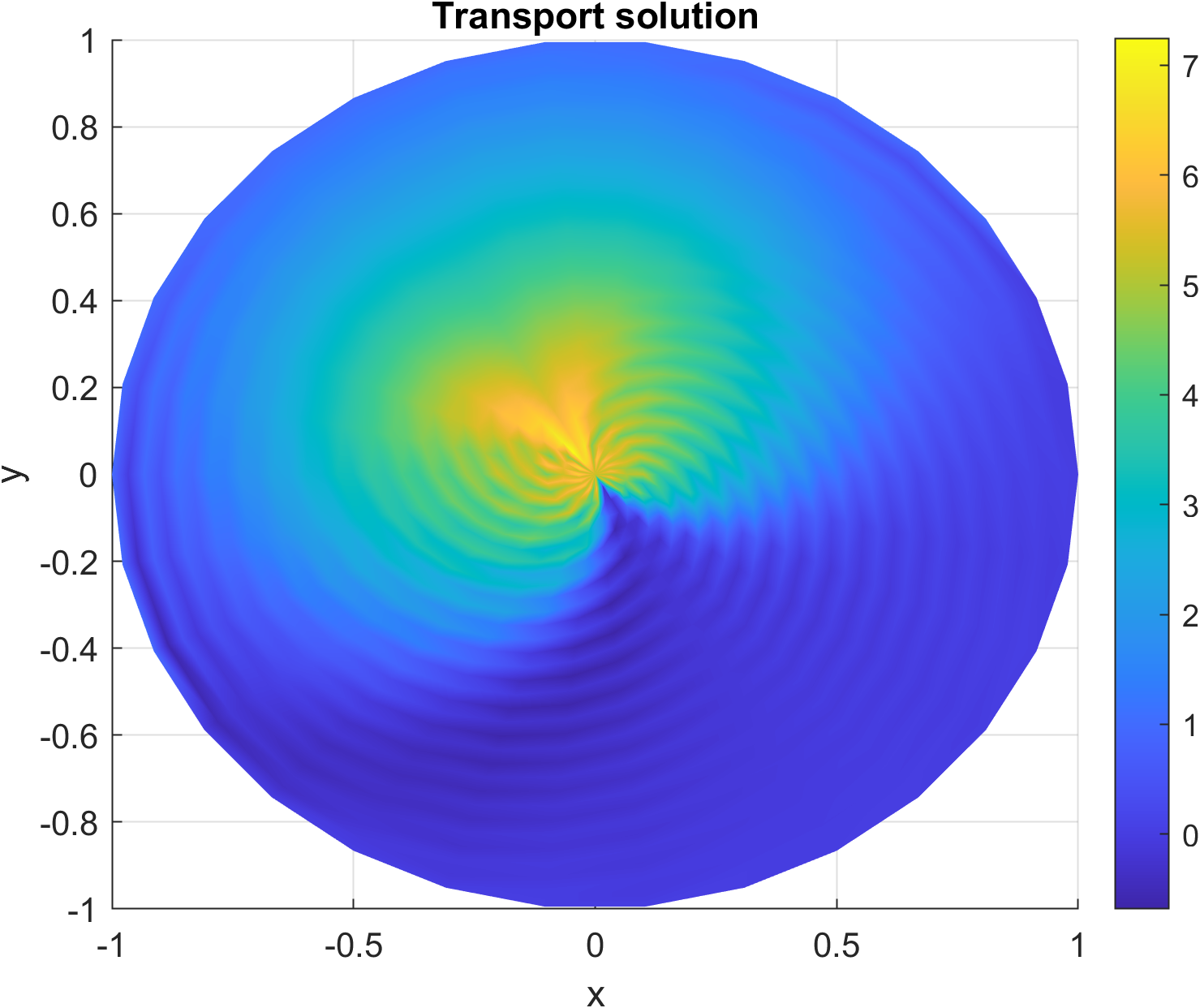}\qquad 
   \includegraphics[scale=0.37]{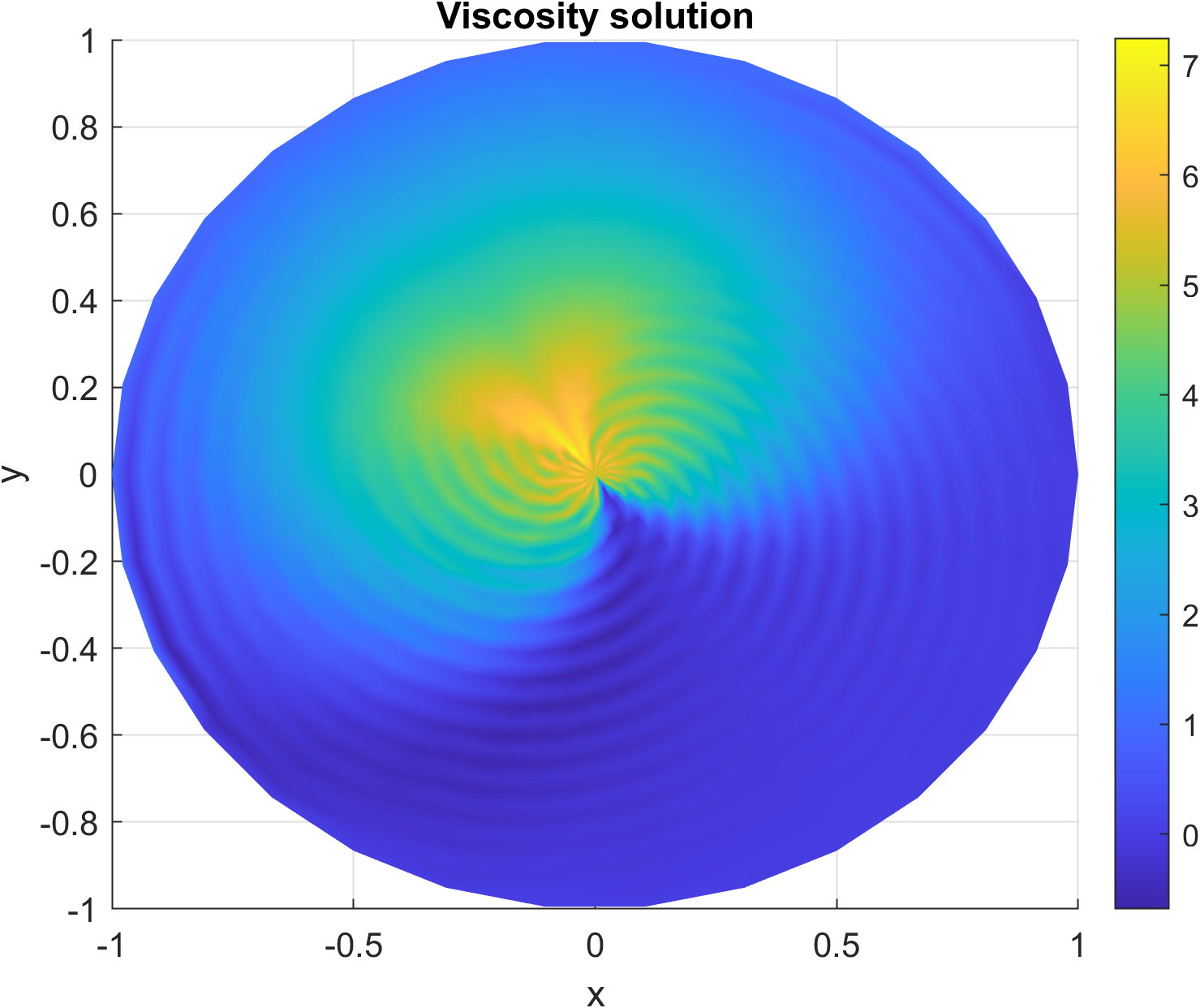}\qquad
   \includegraphics[scale=0.37]{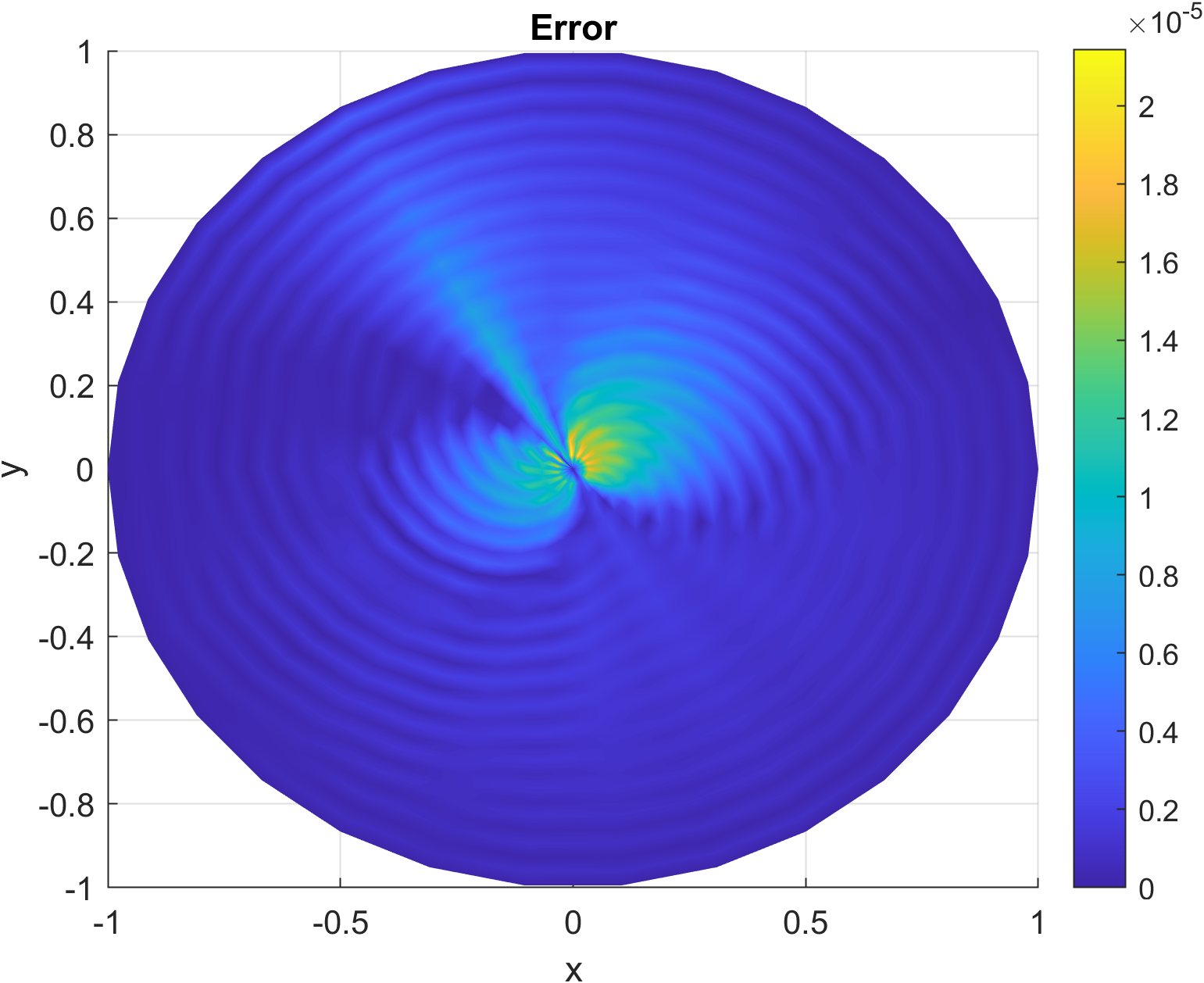}
  \caption{Solution of transport equation, viscosity equation and the relative error for $(I,J,K)=(30,30,10)$ and $\varepsilon=10^{-9}$}
   \end{center}
\end{figure} 
The figures (2)-(4) are computed by a finite difference method. We see that the smaller $\varepsilon$ gets the smaller the relative error becomes in each grid point. We might guess that the viscosity solution converges numerically to the transport solution as $\varepsilon\rightarrow 0$ for other choices of $f,n$ and $\alpha$. 

\section{Conclusions}


The characterization of tensor field tomography as inverse source problem for a transport equation is not new but offers an intriguing possibility to handle these problems for fairly general settings, i.e., for static as well as time-depending tensor fields of arbitrary rank $m$ in a medium with absorption and refraction, in a unified framework. This article builds the theoretical basis for solving the inverse problems by 
\begin{itemize}
\item defining the forward operators in mathematical settings that are relevant for applications
\item proving well-definedness of the operators by transferring to viscosity solutions
\end{itemize}
Any regularization method, be it variational or iterative, can now rely on these findings. Constructing and numerical implementation of such solvers as well as analytic investigations for $\varepsilon\to 0$ are subject of current research.




\begin{appendix}
\section{Proof of proposition \ref{prop}}\label{prop_proof}
\begin{proof}
Using \eqref{christ_refractive} and \eqref{surface_element} we write the left-hand side of \eqref{essential} as

\begin{align*}
-\int_{\Omega_x M}\Gamma_{ij}^{k}\xi_i \xi_j \frac{\partial u}{\partial \xi_k} u \mathrm{d}\omega_x(\xi) 
= \int_{0}^{\pi}\int_{0}^{2\pi}n^{-1}(x)\left(n^{-2}(x)\frac{\partial n}{\partial x_k}(x)-2\xi_k \langle \xi,\nabla n(x)\rangle \right)\frac{\partial u(x,\xi)}{\partial \xi_k}u(x,\xi)\sin\theta\mathrm{d}\varphi\mathrm{d}\theta.\\
\end{align*} 

Next we use \eqref{xi1}-\eqref{xi3} and obtain for $k=1,2,3$ separately

\begin{align*}
\left(n^{-2}(x)\frac{\partial n}{\partial x_1}(x)-2\xi_1 \langle \xi,\nabla n\rangle \right)\frac{\partial u}{\partial \xi_1}\sin\theta
&=\left(n^{-1}(x)\frac{\partial u}{\partial \theta}\right) \left(\partial_1 n\cos\varphi \cos\theta\sin\theta - 2\partial_1 n\cos^3 \varphi \cos\theta\sin^3\theta\right. \\&\left. - 2\partial_2 n \cos^2\varphi \sin\varphi \cos\theta\sin^3\theta - 2 \partial_3 n \cos^2\varphi \cos^2 \theta \sin^2\theta\right)\\
&+ \left(n^{-1}(x)\frac{\partial u}{\partial\varphi}\right)\left(-\partial_1 n \sin\varphi +2\partial_1 n \cos^2\varphi\sin\varphi \sin^2\theta \right. \\&\left. +2\partial_2 n \cos\varphi \sin^2\varphi \sin^2\theta + 2\partial_3 n\cos\varphi\sin\varphi\cos\theta\sin\theta \right)
\end{align*}

\begin{align*}
\left(n^{-2}(x)\frac{\partial n}{\partial x_2}(x)-2\xi_2 \langle \xi,\nabla n\rangle \right)\frac{\partial u}{\partial \xi_2}\sin\theta
&=\left(n^{-1}(x)\frac{\partial u}{\partial \theta}\right) \left(\partial_2 n\sin\varphi \cos\theta\sin\theta - 2\partial_1 n\cos \varphi\sin^2\varphi \cos\theta\sin^3\theta\right. \\&\left. - 2\partial_2 n \sin^3\varphi \cos\theta\sin^3\theta - 2 \partial_3 n \sin^2\varphi \cos^2 \theta \sin^2\theta\right)\\
&+ \left(n^{-1}(x)\frac{\partial u}{\partial\varphi}\right)\left(\partial_2 n \cos\varphi -2\partial_1 n \cos^2\varphi\sin\varphi \sin^2\theta \right. \\&\left. -2\partial_2 n \cos\varphi \sin^2\varphi \sin^2\theta - 2\partial_3 n\cos\varphi\sin\varphi\cos\theta\sin\theta \right)
\end{align*}

\begin{align*}
\left(n^{-2}(x)\frac{\partial n}{\partial x_3}(x)-2\xi_3 \langle \xi,\nabla n\rangle \right)\frac{\partial u}{\partial \xi_3}\sin\theta
&=\left(n^{-1}(x)\frac{\partial u}{\partial \theta}\right)\left( - \partial_3 n\sin^2\theta + 2\partial_1 n\cos\varphi\cos\theta\sin^3\theta \right. \\&\left.+ 2\partial_2 n \sin\varphi \cos\theta\sin^3\theta + 2\partial_3 n \cos^2\theta\sin^2\theta\right).
\end{align*}


After some simplifications we get

\begin{align*}
&\left(n^{-2}(x)\frac{\partial n}{\partial x_k}(x)-2\xi_k \langle \xi,\nabla n(x)\rangle \right)\frac{\partial u}{\partial \xi_k}\sin\theta\\
&=n^{-1}(x)(\partial_1 n \sin\theta\cos\theta\cos\varphi + \partial_2 n \sin\theta\cos\theta\sin\varphi - \partial_3 n\sin^2\theta)\frac{\partial u}{\partial\theta}\\&+ n^{-1}(x)(\partial_2 n\cos\varphi - \partial_1 n\sin\varphi)\frac{\partial u}{\partial\varphi}\\
\end{align*}

and thusly

\begin{align*}
&\quad\int_{0}^{\pi}\int_{0}^{2\pi} n^{-1}(x)\left(n^{-2}(x)\frac{\partial n}{\partial x^i}(x)-2\xi_k \langle \xi,\nabla n(x)\rangle \right)\frac{\partial u(x,\xi)}{\partial \xi_k}u(x,\xi)\sin\theta\mathrm{d}\varphi\mathrm{d}\theta\\
&=\int_{0}^{\pi}\int_{0}^{2\pi}n^{-2}(x) (\partial_1 n \sin\theta\cos\theta\cos\varphi + \partial_2 n \sin\theta\cos\theta\sin\varphi - \partial_3 n\sin^2\theta)\frac{\partial u}{\partial\theta}u\mathrm{d}\varphi\mathrm{d}\theta\\
&+\int_{0}^{\pi}\int_{0}^{2\pi}n^{-2}(x)(\partial_2 n\cos\varphi - \partial_1 n\sin\varphi)\frac{\partial u}{\partial\varphi}u\mathrm{d}\varphi\mathrm{d}\theta.\\
\end{align*}

An integration by parts in the first integral with respect to $\theta$ leads to

\begin{align*}
&\int_{0}^{2\pi}[n^{-2}(x) (\partial_1 n \sin\theta\cos\theta\cos\varphi + \partial_2 n \sin\theta\cos\theta\sin\varphi - \partial_3 n\sin^2\theta)u^2]_{0}^{\pi}\mathrm{d}\varphi\\
&- \int_{0}^{\pi}\int_{0}^{2\pi}n^{-2}(x) (\partial_1 n \sin\theta\cos\theta\cos\varphi + \partial_2 n \sin\theta\cos\theta\sin\varphi - \partial_3 n\sin^2\theta)\frac{\partial u}{\partial\theta}u\mathrm{d}\varphi\mathrm{d}\theta\\
&-\int_{0}^{\pi}\int_{0}^{2\pi}n^{-2}(x)  (\partial_1 n\cos\varphi(\cos^2\theta - \sin^2\theta) + \partial_2 n \sin\varphi(\cos^2\theta - \sin^2\theta)-2\partial_3 n \sin\theta\cos\theta)u^2     \mathrm{d}\varphi\mathrm{d}\theta\\
\end{align*}

leading to

\begin{align*}
& \int_{0}^{\pi}\int_{0}^{2\pi}n^{-2}(x) (\partial_1 n \sin\theta\cos\theta\cos\varphi + \partial_2 n \sin\theta\cos\theta\sin\varphi - \partial_3 n\sin^2\theta)\frac{\partial u}{\partial\theta}u\mathrm{d}\varphi\mathrm{d}\theta\\
&=- \frac{1}{2}\int_{0}^{\pi}\int_{0}^{2\pi}n^{-2}(x)  (\partial_1 n\cos\varphi(\cos^2\theta - \sin^2\theta) + \partial_2 n \sin\varphi(\cos^2\theta - \sin^2\theta)-2\partial_3 n \sin\theta\cos\theta)u^2     \mathrm{d}\varphi\mathrm{d}\theta.\\
\end{align*}

An according integration by parts with respect to $\varphi$ yields

\begin{align*}
&\quad \int_{0}^{\pi}[n^{-2}(x)\left(\partial_2 n(x) \cos\varphi -\partial_1 n(x) \sin\varphi \right)u^2]_{0}^{2\pi}\mathrm{d}\theta\\ 
&- \int_{0}^{\pi}\int_{0}^{2\pi}u \frac{\partial u}{\partial\varphi} n^{-2}(x)\left(\partial_2 n(x) \cos\varphi -\partial_1 n(x) \sin\varphi \right)\mathrm{d}\varphi\mathrm{d}\theta\\
&-\int_{0}^{\pi}\int_{0}^{2\pi} u^2 n^{-2}(x)\left(-\partial_2 n(x) \sin\varphi -\partial_1 n(x) \cos\varphi \right)\mathrm{d}\varphi\mathrm{d}\theta.\\
\end{align*}

The first summand vanishes and we get

\begin{align*}
\int_{0}^{\pi}\int_{0}^{2\pi}n^{-2}(x)(\partial_2 n\cos\varphi - \partial_1 n\sin\varphi)\frac{\partial u}{\partial\varphi}u\mathrm{d}\varphi\mathrm{d}\theta
&= - \frac{1}{2}\int_{0}^{\pi}\int_{0}^{2\pi} u^2 n^{-2}(x)\left(-\partial_2 n(x) \sin\varphi -\partial_1 n(x) \cos\varphi \right)\mathrm{d}\varphi\mathrm{d}\theta.\\
\end{align*}

Finally, we arrive at

\begin{align*}
&\quad\int_{\Omega_x M}n^{-1}(x)\left(n^{-2}(x)\frac{\partial n}{\partial x_k}(x)-2\xi_k \langle \xi,\nabla n(x)\rangle\right)\frac{\partial u}{\partial \xi_k}u\mathrm{d}\omega_x(\xi)\\
&= \int_{0}^{\pi}\int_{0}^{2\pi} \frac{1}{2}u^2\cdot n^{-2}(x)(\partial_2 n(x) \sin\varphi +\partial_1 n(x) \cos\varphi )\mathrm{d}\varphi\mathrm{d}\theta \\
&+  \int_{0}^{\pi}\int_{0}^{2\pi}\frac{1}{2}u^2\cdot n^{-2}(x) (\partial_1 n\cos\varphi(\sin^2\theta - \cos^2\theta) + \partial_2 n \sin\varphi(\sin^2\theta - \cos^2\theta)+2\partial_3 n \sin\theta\cos\theta)\mathrm{d}\varphi\mathrm{d}\theta\\
&= \int_{\Omega_x M}n^{-1}\langle \nabla n, \xi\rangle  u^2 \mathrm{d}\omega_x (\xi).\\
\end{align*}

\end{proof}

\end{appendix}

\bibliography{references} 

\end{document}